\newtheorem{theorem}{Theorem}
\newtheorem{corollary}{Corollary}
\newtheorem{lemma}{Lemma}
\newtheorem{proposition}{Proposition}
\newtheorem{assumptions}{Assumptions}
\newenvironment{proof}[1][Proof]{\noindent\textbf{#1.} }{\ \rule{0.5em}{0.5em}}
\def\lessim{\ \lower4pt\hbox{$\buildrel{\displaystyle <}\over\sim$}\ }
\begin{document}

\title{On the estimation of smooth densities by strict probability densities at optimal rates in sup-norm}
\author{\textsc{Evarist Gin\'{e}\footnote{This author is grateful for the hospitality of the MIT Mathematics Department, where he carried out most of his research on this article during a sabbatical leave.}   and Hailin Sang\footnote{Most of this author's research on this article was done  at  the Department of Mathematics of the University of Cincinnati, and he acknowledges its hospitality.}} \\
\\
\textit{University of Connecticut} and \textit{National Institute of Statistical Sciences}}
\date{May 2010}
\maketitle

\begin{abstract}
\noindent  It is shown that the variable bandwidth density estimator proposed by McKay (1993a and b)  following earlier findings by Abramson (1982) approximates  density functions in $C^4(\mathbb R^d)$ at the minimax rate in the supremum norm over bounded sets where the preliminary density estimates on which they are based are bounded away from zero. A somewhat more complicated estimator proposed by  Jones  McKay and Hu (1994) to approximate densities in $C^6(\mathbb R)$ is also shown to attain minimax rates in sup norm over the same kind of sets. These estimators are strict probability densities.  
\end{abstract}

\textit{MSC 2010 subject classification}: Primary: 62G07.

\textit{Key words and phrases: kernel density estimator, variable bandwidth, clipping filter, square root law, sup-norm loss, spatial adaptation, rates of convergence.}

\hfuzz=1truein

\section{Introduction and statement of results}
Let $X_i, i\in \mathbb N$, be independent identically distributed (i.i.d.) observations with density function $f(t)$, $t\in \mathbb R$ (to be replaced below by $t\in\mathbb R^d$). Setting $K$  to be a symmetric probability kernel satisfying some smoothness and differentiability properties, Abramson (1982) proposed the following `ideal' or `oracle' variable bandwidth kernel density estimator:
\begin{equation}\label{abramson}
f_A(t;h_n)=n^{-1}\sum_{i=1}^n h_n^{-1}\gamma (t, X_i)K(h_n^{-1}\gamma (t, X_i)(t-X_i)),
\end{equation}
where, $\gamma (t,s)=(f(s)\vee f(t)/10)^{1/2}$, which is made into a `real' estimator by replacing $f$ with a preliminary estimator. In words, in Abramson's estimator the window-width about each observation $X_i$ is inversely proportional to the square root of the density $f$ at $X_i$ unless $f(X_i)$ is too small, with the modification of $\gamma(t,X_i)$ for small values of $f(X_i)$  preventing against the possibility that observations $X_i$ very far away from $t$ exert too much influence on the estimate of $f(t)$. This estimator adapts to the local density of the data, and if the adaptation is adequate, which it is,  it seems that it should do better than the usual `fixed bandwidth' kernel density estimator: in fact Abramson showed that, while the variance of his estimator is of the same order as that of the regular kernel density estimator,  its bias is asymptotically of the order of $h_n^4$, assuming $f$ has four uniformly continuous derivatives and $f(t)\ne 0$ (while the 
  bias achieved by a symmetric non-negative kernel is of the order of only $h_n^2$). So, one has a {\it non-negative} estimator of the density that performs asymptotically as a kernel estimator based on a fourth order (hence, partly negative) kernel. However,  this variable bandwidth estimator is not the density function of a true probability measure since the integral of $f_A(t;h_n)$ over $t$ is not  $1$ -it would if $\gamma$ depended only on $s$-.  Terrell and Scott (1992) and  McKay (1993b) constructed different examples showing that the Abramson ideal estimator without the `clipping filter' $(f(t)/10)^{1/2}$ on $f^{1/2}(X_i)$,
$$f_{HM}(t;h_n)=n^{-1}\sum_{i=1}^n h_n^{-1}f^{1/2}(X_i)K(h_n^{-1}f^{1/2}(X_i)(t-X_i)),$$
which is a true probability density, may have bias of order much larger than $h_n^4$, and in fact their examples show that clipping is necessary for such bias reduction.   Hall, Hu and Marron (1995) then proposed the ideal estimator
\begin{equation}\label{HHM}
f_{HHM}(t;h_n)=\frac{1}{n h_n}\sum_{i=1}^{n}K\left(\frac{t-X_i}{h_n} f^{1/2}(X_i)\right) f^{1/2}(X_i)I(|t-X_{i}|<h_nB)
\end{equation}
where $B$ is a fixed constant; see also Novak (1999) for a similar estimator.
This estimator is non-negative and achieves the desired bias reduction but, like Abramson's, it does not integrate to 1. McKay (1993a and b) discovered a smooth clipping procedure which solves the problem of obtaining a non-negative ideal estimator that integrates to 1 and that has a bias of the order of $h_n^4$ for densities with four continuous derivatives. He used in (\ref{abramson}) a function $\gamma(t,s)=\gamma(s)$ not dependent on $t$, of the form  
\begin{equation}\label{clipf1}
\gamma(s):=\alpha(f(s)):=c\nu(\sqrt{f(s)}/c):=cp^{1/2}(f(s)/c^2),
\end{equation}
where the function $p$ (or the function $\nu$) is at least four times differentiable and satisfies $p(x)\ge 1$ for all $x$ and $p(x)=x$ for all $x\ge t_0$ for some $0<t_0<\infty$, and $0<c<\infty$ is a fixed number (note $p(x)=\nu^2(\sqrt{x})$ and while McKay (1993b)  uses $\nu$ we will use $p$ for convenience in calculations later). Functions $p$ with these properties will be denoted by {\it clipping functions}. Then, McKay's ideal estimator is
\begin{equation}\label{McKayideal}
f_{McK}(t;h_n)=n^{-1}\sum_{i=1}^n h_n^{-1}\alpha(f (X_i))K(h_n^{-1}\alpha(f (X_i))(t-X_i)).
\end{equation}
The bias reduction to $h_n^4$ is obtained uniformly over regions where $f(t)$ is bounded away from zero (or, if one allows $c$ in (\ref{clipf1}) to vary with $h_n$, uniformly in $f\in \mathbb R$).
Note that $\gamma$ may depend on $h_n$ as well and still have $\tilde{f}(t;h_n)$ integrate to 1. So, one may ask if with a more general function $\gamma(s,h)$ one can achieve further bias reduction.  McKay (1993b) and Jones, McKay and Hu (1994) show that using $\gamma(s,h)=\alpha(f(s))(1+h^2\beta(s))$, with $\alpha$ as above and a convenient function $\beta$ that depends on $f$, $f'$ and $f''$, a bias of the order of $h_n^6$ can be achieved on densities that are six times differentiable. This new estimator may be much less practical than the previous one since, in order to implement it, one has to obtain preliminary estimates not only of $f$ but also of its first two derivatives; moreover, these authors claim that preliminary simulations  with the ideal estimators show only modest gains by this new estimator over  (\ref{McKayideal}). 

The McKay and  Jones-McKay-Hu ideal estimators mentioned so far achieve bias reduction by adapting the bandwidth about each $X_i$ to the size of $f(X_i)$, with smooth clipping for small values of $f(X_i)$ and using kernels that are concentrated enough,  in order to keep the estimators local, and moreover they are strict probability densities. Samiuddin and El-Sayyad (1990) achieved the same results by shifting the centers of the windows by random quantities. See Jones, McKay and Hu (1994) who show that, by combining the two methods one can obtain an infinite number of such estimators, the general form of their ideal counterparts being
\begin{equation}\label{generalideal}
\bar{f}(t;h_n)=n^{-1}\sum_{i=1}^n h_n^{-1}\gamma (X_i)K(h_n^{-1}\gamma (X_i)(t-X_i-h_n^2\Gamma(X_i))),
\end{equation}
where the functions $\gamma$ and $\Gamma$ may depend on the bandwidth $h_n$, the density function $f(t)$ and its derivatives, and they considered $\gamma(z)=\alpha(f(z))(1+h_n^2\beta(z))$ and $\Gamma(z)=A(z)+h_n^2B(z)$ where $\alpha, ~\beta,~A$ and $B$ do not depend on $h_n$, but depend on $f$ (and $\beta$ and $B$ on its derivatives as well). However, Jones, McKay and Hu (1994) argue that among these, the most practical is McKay's modification of Abramson's estimator based on (\ref{McKayideal}), followed, at a distance,   by the one with $\Gamma=0$ and $\gamma(z)=\gamma(z,h)$ in (\ref{generalideal}) mentioned above, and we will pay attention only to these estimators in this article.

The estimators (\ref{abramson}) to (\ref{generalideal})  are usually called ideal estimators in the literature, and they give rise to true estimators $\hat{f}$ by replacement of  the density and its derivatives in their formulas by preliminary kernel estimators, perhaps using different sequences of bandwidths and different kernels, as in (\ref{realest}) and (\ref{h6idealest}) below.
The estimators $\hat f(t)$ are non-linear and it is difficult to measure their discrepancy from $f(t)$. After Hall and Marron (1988), this task is divided into two parts, a) the study of the ideal estimator, and b)
the study of the discrepancy between the ideal and the real estimators. 

The literature emphasizes the bias part of the ideal estimators, and on this one may say that the work of McKay (1993a and b) and Jones, McKay and Hu (1994) is final: the clipped estimators achieve bias reduction uniformly in regions where the density is bounded above from zero, and clipping is necessary for this reduction. Regarding the variance part of the ideal estimators, it is only shown in the literature that it is pointwise of the same order as the usual kernel density estimators, but there are no published  results on the uniform closeness of the (ideal) estimator to its mean except for one in Gin\'e and Sang (2010) for the estimator (\ref{HHM}) of Hall, Hu and Marron (1995). The discrepancy between the ideal and the corresponding real estimators for estimators based on Abramson's square root law turns out to be exactly of the same order as the difference between the ideal and the true density $f$, not less, and this discrepancy was first considered in detail by Hall and Marron (1988) and Hall, Hu and Marron (1995), who proved that it is asymptotically of the order of $n^{-4/9}$, {\it pointwise} and {\it in probability} for bounded densities with four bounded derivatives. McKay (1993b) adapted their method of proof and corrected some inaccuracies from Hall and Marron (1988) to show that this discrepancy for the multidimensional analogue of (\ref{McKayideal}) is of the order of $n^{-4/(8+d)}$, also pointwise and in probability, and for dimension $d<6$. Gin\'e and Sang (2010) show that, in the case of the Hall, Hu and Marron estimator and in dimension 1, the discrepancy is of the order of $((\log n)/n)^{4/9}$ almost surely and uniformly over intervals where the preliminary estimator is bounded away from zero,  as well as uniformly over densities with fixed but arbitrary bounds on their sup norm and the sup norms of their first four derivatives, thus obtaining a complete result on the uniform rate of approximation of the true density by  the real estimator corresponding to (\ref{HHM}) for $d=1$. Several of our arguments simplified by undersmoothing the preliminary estimator. These rates are optimal. In this article we prove  similar results, in $\mathbb R^d$ and without undersmoothing, for the McKay (1993b) estimator based on the generalization of  (\ref{McKayideal}) to $\mathbb R^d$, for any $d<\infty$, and also, but only in $\mathbb R$, for the estimator   with $\Gamma=0$ and $\gamma(z,h)=\alpha(f(z))/(1+h^2\beta(z))$ in (\ref{generalideal}) (these estimators can be handled in general and in $\mathbb R^d$, but the details are cumbersome and the results might not be too practical, according to the Jones-McKay-Hu (1994) study). In order to obtain these results we use empirical process theory, particularly and repeatedly, Talagrand's (1996) exponential inequality for empirical processes (which can often be replaced by a general result of Mason and Swanepoel (2010) that in addition yields uniformity in bandwidth) and, also at an important instance, an exponential inequality of Major (2006) for canonical $U$-processes, tools that were not available to previous authors, and that were introduced in density estimation respectively by Einmahl and Mason (2000) and Gin\'e and Mason (2007). We now describe our results.

Define
\begin{equation}\label{region0}
{\cal D}_r={\cal D}_r(f):=\{t\in\mathbb R^d: f(t)>r>t_0 c^2, \|t\|<1/r\},\ \ r>0.
\end{equation}
Here, $c$ and $t_0$ are the constants that appear in the clipping function $\gamma$ in (\ref{clipf1}).
In this paper, we obtain the optimal almost sure rate of uniform convergence  on the sets ${\cal D}_r$ for the two Jones-McKay-Hu estimators derived from Abramson's (1982) square root rule.  For the first, with bias $h_n^4=h_{2,n}^4$, the ideal estimator  is the multidimensional version of (\ref{McKayideal}),
\begin{equation}\label{idealest}
f_{McK}(t;h_{2,n})=\frac{1}{n h_{2,n}^d}\sum_{i=1}^{n}K\left(\frac{t-X_i}{h_{2,n}}\alpha( f(X_i))\right)\alpha^d(f(X_i))
\end{equation}
with 
\begin{equation}\label{alph}
\alpha(x)=cp^{1/2}(c^{-2}x), \ \ x\ge 0.
\end{equation} 
It is the special case of  (\ref{generalideal}) for $\Gamma=0$ and $\gamma(t)=\alpha(f(t))$ independent of $h_n$.  The estimator itself is
\begin{equation}\label{realest}
\hat f(t;h_{1,n},  h_{2,n})=\frac{1}{n h_{2,n}^d}\sum_{i=1}^{n}K\left(\frac{t-X_i}{h_{2,n}}\alpha(\hat f(X_i;h_{1,n}))\right)\alpha^d(\hat f(X_i;h_{1,n})),
\end{equation}
 where
$\hat f(x;h_{1,n})$ is the classical kernel density estimator
\begin{equation}\label{real}
\hat f(t;h_{1,n})=\frac{1}{n h_{1,n}^d}\sum_{i=1}^{n}K\left(\frac{t-X_i}{h_{1,n}}\right).
\end{equation}
The following notation will be convenient: ${\cal P}_C$ will denote the set of all probability densities on $\mathbb R^d$
that are uniformly continuous and are bounded by $C<\infty$, and ${\cal P}_{C,k}$ will denote the set of densities on $\mathbb R^d$ for which themselves and their partial derivatives of  order $k$ or lower are bounded by $C<\infty$ and are uniformly continuous. The dependence on the dimension $d$ will be left implicit both for the regions ${\cal D}_r$ and for the sets of densities ${\cal P}_{C,k}$.

Here is our first theorem:

\begin{theorem}\label{main0}
Assume that the kernel $K$ on $\mathbb R^d$ is non-negative, integrates to 1 and has the form $K(t)=\Phi(\|t\|^2)$  for some real twice boundedly differentiable even function $\Phi$  with support contained in $[-T,T]$, $T<\infty$. Let $\alpha (x)$ by defined by (\ref{alph}) for a nondecreasing clipping function $p(s)$ ($p(s)\ge 1$ for all $s$ and $p(s)=s$ for all $s\ge t_0\ge 1$) with five bounded and uniformly continuous derivatives, and constant $c>0$.   Set $h_{2,n}= ((\log n)/n)^{1/(8+d)}$ and $h_{1,n}=((\log n)/n)^{1/(4+d)}$,  $n\in\mathbb N$. Then, the estimator $\hat f(t;h_{1,n},  h_{2,n})$ given by (\ref{realest}) and (\ref{real}) with the kernel, bandwidths and function $\alpha$ just described, satisfies
\begin{equation}\label{main1}
\sup_{t\in {\cal D}_r(f)}\left|\hat f(t;h_{1,n},  h_{2,n})-f(t)\right|=O_{\rm a.s.}\left(\left(\frac{\log n}{n}\right)^{4/(8+d)}\right)\ \ uniformly\ \ in\ \ f\in{\cal P}_{C,4}
\end{equation}
for any $C<\infty$.
If $\hat {\cal D}_r^n(f)$ is defined as
\begin{equation}\label{region1}
\hat D_r^n(f)=\left\{t: \hat f(t;h_{1,n})>2r>t_0 c^2, \|t\|<1/r\right\},
\end{equation}
then we also have
\begin{equation}\label{main2}
\sup_{t\in \hat {\cal D}_r^n}\left|\hat f(t;h_{1,n},  h_{2,n})-f(t)\right|=O_{\rm a.s.}\left(\left(\frac{\log n}{n}\right)^{4/(8+d)}\right)\ \ uniformly\ \ in\ \ f\in{\cal P}_{C,4}.
\end{equation}
\end {theorem}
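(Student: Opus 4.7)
The plan is the standard three-part split
\begin{equation*}
\hat f(t;h_{1,n},h_{2,n}) - f(t) = \underbrace{[\hat f - f_{McK}]}_{\text{(I)}} + \underbrace{[f_{McK} - \mathbb{E}f_{McK}]}_{\text{(II)}} + \underbrace{[\mathbb{E}f_{McK} - f]}_{\text{(III)}},
\end{equation*}
with the goal of bounding each by $((\log n)/n)^{4/(8+d)}$ uniformly in $t \in {\cal D}_r$ and $f \in {\cal P}_{C,4}$. For the bias (III), the compact support of $K$ and boundedness of ${\cal D}_r$ confine the effective range of integration to a neighbourhood on which $f > t_0c^2$, so $\alpha\circ f = f^{1/2}$ on the support of the integrand; a Taylor expansion of $\mathbb{E}f_{McK}(t;h_{2,n}) - f(t)$ in $h_{2,n}$, as in McKay (1993b), shows that the $h_{2,n}^2$ coefficient vanishes by the Abramson square-root identity $\int K(y\alpha)\alpha^d\,dy \equiv 1$, leaving $O(h_{2,n}^4)$. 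Tracking constants through the expansion gives uniformity in $f \in {\cal P}_{C,4}$.

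For the centred ideal term (II), set $\varphi_t(x) = h_{2,n}^{-d}K(h_{2,n}^{-1}(t-x)\alpha(f(x)))\alpha^d(f(x))$. The class $\{\varphi_t : t \in {\cal D}_r\}$ is a uniformly bounded VC-type class with envelope of order $h_{2,n}^{-d}$ and $L^2(P)$ variance of order $h_{2,n}^{-d}$, whose VC characteristics depend only on $K$, $p$ and $C$. Talagrand's (1996) exponential inequality, applied as in Einmahl and Mason (2000) and Gin\'e and Mason (2007), gives
\begin{equation*}
\sup_{t \in {\cal D}_r}|(P_n - P)\varphi_t| = O_{\mathrm{a.s.}}\bigl(\sqrt{\log n/(nh_{2,n}^d)}\bigr) = O_{\mathrm{a.s.}}\bigl(((\log n)/n)^{4/(8+d)}\bigr),
\end{equation*}
uniformly in $f$, using $nh_{2,n}^d = n^{8/(8+d)}(\log n)^{d/(8+d)}$.

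Step (I) is the main obstacle. Expand $z \mapsto h_{2,n}^{-d}K(h_{2,n}^{-1}(t-x)\alpha(z))\alpha^d(z)$ to second order in $z$ around $z = f(x)$, with increment $\Delta_i = \hat f(X_i;h_{1,n}) - f(X_i)$. The classical uniform rate $\|\Delta\|_\infty = O_{\mathrm{a.s.}}(((\log n)/n)^{2/(4+d)})$, combined with a $z$-second-derivative of order $h_{2,n}^{-d}$ supported on a set of $P$-measure $O(h_{2,n}^d)$, gives a quadratic remainder of size $O(\|\Delta\|_\infty^2) = o(((\log n)/n)^{4/(8+d)})$. The linear term equals
\begin{equation*}
L_n(t) = \frac{1}{n^2}\sum_{i,j}B(t,X_i)\bigl[h_{1,n}^{-d}K((X_i-X_j)/h_{1,n}) - f(X_i)\bigr],
\end{equation*}
where $B(t,x) = \partial_z[h_{2,n}^{-d}K(h_{2,n}^{-1}(t-x)\alpha(z))\alpha^d(z)]|_{z=f(x)}$. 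A Hoeffding decomposition of this degree-two $U$-statistic produces three pieces: (a) a deterministic piece, where the $O(h_{1,n}^2)$ bias of $\hat f$ collapses to $O(h_{1,n}^2h_{2,n}^2)$ after integration against $B(t,\cdot)f$, thanks to the same Abramson cancellation $\partial_z\int K(y\alpha(z))\alpha^d(z)\,dy = 0$, yielding $O(h_{1,n}^2h_{2,n}^2 + h_{1,n}^4) = O(((\log n)/n)^{4/(8+d)})$; (b) a linear empirical-process piece, controlled by Talagrand applied to the VC class $\{B(t,\cdot): t \in {\cal D}_r\}$; and (c) a degenerate (canonical) $U$-process piece, controlled by Major's (2006) exponential inequality as used in Gin\'e and Mason (2007). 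For the chosen bandwidths both (b) and (c) are of smaller order than the target rate, and all constants depend only on $C$, $K$ and $p$, giving uniformity in $f$.

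Finally, (\ref{main2}) follows from (\ref{main1}) because the uniform rate $\sup_x|\hat f(x;h_{1,n}) - f(x)| = O_{\mathrm{a.s.}}(((\log n)/n)^{2/(4+d)}) = o(1)$ implies that, almost surely for all large $n$, $\hat f(t;h_{1,n}) > 2r$ forces $f(t) > r$, hence $\hat{\cal D}_r^n(f) \subseteq {\cal D}_r(f)$.
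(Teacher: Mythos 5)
Your proposal is correct in outline and follows essentially the same route as the paper: the same split into bias and variance of the ideal estimator (McKay's expansion plus Talagrand) and a real-minus-ideal term handled by Taylor-expanding in $\hat f-f$, Hoeffding-decomposing the resulting degree-two $U$-statistic, invoking Major's inequality for the canonical part, and exploiting precisely the cancellation $\partial_z\int K(y\alpha(z))\alpha^d(z)\,dy=0$ (i.e.\ that the induced kernel $L$ in (\ref{L1Lfunctions}) has zero integral, together with its vanishing first moments and the $C^4$ smoothness of $f$) to push the bias-interaction term from $O(h_{1,n}^2)$ to $O(h_{1,n}^2h_{2,n}^2)$ and so avoid undersmoothing; the derivation of (\ref{main2}) from (\ref{main1}) is also identical. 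One small inaccuracy: your claim that the linear empirical-process piece is of smaller order than the target is not right for the projection onto the smoothing variable --- this is the term $T(t;h_{1,n},h_{2,n})$ of (\ref{TT}), which is exactly of order $((\log n)/n)^{4/(8+d)}$ and is in fact the dominant contribution to $\hat f-f_{McK}$ (see Proposition \ref{real-ideal}) --- but since only an $O$-bound at that rate is needed, this does not affect the validity of your argument.
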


We should recall that, given measurable functions $Z_{n,f}(X_1,\dots, X_n)$, $X_i$ the coordinate functions of $({\mathbb R^d})^{\mathbb N}$, $f\in {\cal D}$, $\cal D$ a collection of densities on $\mathbb R^d$,  we say that the collection of random variables $Z_{n,f}(X_1,\dots, X_n)$ is asymptotically of the order of $a_n$ {\it uniformly in $f\in\cal D$} if there exists $C<\infty$ such that
$$\lim_{k\to\infty}\sup_{f\in{\cal D}}(P_f)^{\mathbb N}\left\{\sup_{n\ge k}\frac{1}{a_n}|Z_{n,f}(X_1,\dots,X_n)|>C\right\}=0,$$
where $dP_f(x)=f(x)dx$, and that it is $o_{\rm a.s.}(a_n)$ uniformly in $f$ if this limit holds for every $C>0$. In the text we will use $\Pr_f$ for $(P_f)^{\mathbb N}$, or even $\Pr$ if $f$ is understood from the context.

We should note that whereas the bias of the ideal estimator is of the right order uniformly only over ${\cal D}_r$, both the variance part of the ideal estimator and the difference between ideal and real estimators are of the right order uniformly in $\mathbb R^d$.

Here is an example of a five times differentiable clipping function $p$ for which $t_0=2$:
\begin{displaymath}
p(t)=\left\{\begin{array}{ll}
1+\frac{t^6}{64}\left(1-2(t-2)+\frac{9}{4}(t-2)^2-\frac{7}{4}(t-2)^3+\frac{7}{8}(t-2)^4\right) & \textrm{if $0\le t\le2$}\\
t&\textrm{if $t\ge 2$}\\
1&\textrm {if $t\le 0$}\end{array}
\right..
\end{displaymath}
(We could as well take this formula as the definition of $\nu$, and set $p(x)=\nu(\sqrt{x})$ as in (\ref{clipf1}).) This is based on McKay's (1993b) example of a four times differentiable clipping function. Other examples of such functions are possible, and in particular see McKay, loc. cit., for an infinitely differentiable one.

In $\mathbb R$, the ideal estimator with bias $h_n^6=h_{2,n}^6$ that we will consider has the  form
\begin{equation}\label{h6idealest}
f_{JKH}(t;h_{2,n})=\frac{1}{n h_{2,n}}\sum_{i=1}^{n}K\left(\frac{t-X_i}{h_{2,n}}\gamma_{h_{2,n}}(X_i)\right)\gamma_{h_{2,n}}(X_i),
\end{equation}
where
\begin{equation}\label{alphabeta}
\gamma_{h_{2,n}}(x)=\frac{\alpha(f(x))}{1+h_{2,n}^2\beta(x)}\  {\rm with}\ \alpha(f(x))=cp^{1/2}(c^{-2}f(x)), \   \beta(x)=\frac{\tau_4[f^{\prime\prime}(x)f(x)-2(f^\prime(x))^2]}{24\tau_2\alpha^6(f(x))}
\end{equation}
with $$\tau_r=\int K(x)|x|^rdr,\ \ r>0,$$
((\ref{h6idealest}) is another special case of the general estimator (\ref{generalideal}) with $\Gamma=0$, but with $\gamma$ depending on the bandwidth $h_{2,n}$).
The true estimator corresponding to (\ref{h6idealest}) is
\begin{eqnarray}\label{h6realest}
&&\hat f(t;h_{1,n},h_{2,n},h_{3,n},h_{4,n})\notag\\
&&=\frac{1}{n h_{2,n}}\sum_{i=1}^{n}K\left(\frac{t-X_i}{h_{2,n}}\hat \gamma(X_i;h_{1,n},h_{2,n},h_{3,n},h_{4,n})\right)\hat \gamma(X_i;h_{1,n},h_{2,n},h_{3,n},h_{4,n}),
\end{eqnarray}
where
$$\hat \gamma(x;h_{1,n},h_{2,n}, h_{3,n},h_{4,n})=\frac{\hat\alpha(x;h_{1,n})}{1+h_{2,n}^2\hat\beta(x;h_{1,n},h_{3,n},h_{4,n})},\ \hat\alpha(x;h_{1,n}):=\alpha(\hat f(x; h_{1,n})),$$ 
\begin{equation}\label{alphabetahat}
\hat\beta(x;h_{1,n},h_{3,n},h_{4,n})=\frac{\tau_4[f_{G_2}(x;h_{4,n})\hat f(x; h_{1,n})-2(f_{G_1}(x;h_{3,n}))^2]}{24\tau_2\hat\alpha^6(x;h_{1,n})}.
\end{equation}
 Here $\hat f(x;h_{1,n})$ is the classical kernel density estimator (\ref{real}), and $f_{G_1}$ and $f_{G_2}$ are the estimators of $f'$ and $f''$ given respectively by
\begin{equation}\label{first}
f_{G_1}(x;h_{3,n})=\frac{1}{n h_{3,n}^2}\sum_{i=1}^{n}G^\prime\left(\frac{x-X_i}{h_{3,n}}\right)
\end{equation}
and
\begin{equation}\label{second}
f_{G_2}(x;h_{4,n})=\frac{1}{n h_{4,n}^3}\sum_{i=1}^{n}G^{\prime\prime}\left(\frac{x-X_i}{h_{4,n}}\right),
\end{equation}
where $G$ is a fourth order kernel, that is, such that
\begin{displaymath}
\int z^j G(z)dz= \left\{\begin{array}{ll} 1 & \textrm{if $j=0$}\\0 & \textrm{if $1\leqslant j \leqslant 3$}\\ a\neq 0 & \textrm{if $j=4$} \end{array}\right.
 \end{displaymath}
We will sketch the proof of  the following theorem for this estimator:

\begin{theorem}\label{h6main0}
Assume the kernel $K$ is as in Theorem \ref{main0}.  Assume that the fourth order kernel $G$ is supported by $[-T_G,T_G]$ for some $T_G<\infty$, is twice continuously differentiable,  is symmetric about zero and integrates to 1. Let $\alpha (x)$ by defined by (\ref{alph}) for a nondecreasing clipping function $p(s)$ ($p(s)\ge 1$ for all $s$ and $p(s)=s$ for all $s\ge t_0\ge 1$) with seven bounded and uniformly continuous derivatives, and constant $c>0$, and let $\gamma$ and $\beta$ be as in (\ref{alphabeta}). Set $h_{1,n}=((\log n)/n)^{1/5}$, $h_{2,n}=h_{4,n}= ((\log n)/n)^{1/13}$ and $h_{3,n}=((\log n)/n)^{1/11}$,  $n\in\mathbb N$. Let $\hat \gamma$, $\hat\alpha$, $\hat \beta$ be as in (\ref{alphabetahat}) for these bandwidths, and let $\hat f$ be defined by (\ref{h6realest}) with the kernels, bandwidths and function $\hat\gamma$ just described. Then we have
\begin{equation}\label{h6main1}
\sup_{t\in D_r}\left|\hat f(t;h_{1,n},  h_{2,n}, h_{3,n},h_{4,n})-f(t)\right|=O_{\rm a.s.}\left(\left(\frac{\log n}{n}\right)^{6/13}\right)\ \ uniformly\ \ in\ \ f\in{\cal P}_{C,6}.
\end{equation}
Further,  for the region defined in (\ref{region1}), we have
\begin{equation}\label{h6main2}
\sup_{t\in \hat D_r^n}\left|\hat f(t;h_{1,n},  h_{2,n}, h_{3,n},h_{4,n})-f(t)\right|=O_{\rm a.s.}\left(\left(\frac{\log n}{n}\right)^{6/13}\right)\ \ uniformly\ \ in\ \ f\in{\cal P}_{C,6}.
\end{equation}
\end {theorem}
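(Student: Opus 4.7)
The plan is to use the classical Hall--Marron (1988) three-term decomposition
\begin{equation*}
\hat f(t;h_{1,n},h_{2,n},h_{3,n},h_{4,n}) - f(t) = (\hat f - f_{JKH}) + (f_{JKH} - \mathbb E f_{JKH}) + (\mathbb E f_{JKH} - f),
\end{equation*}
and to control each summand at rate $((\log n)/n)^{6/13}$ uniformly in $t\in{\cal D}_r$ and $f\in{\cal P}_{C,6}$. The bandwidth $h_{2,n}=((\log n)/n)^{1/13}$ is the unique choice balancing the sixth-order ideal bias $h_{2,n}^6$ against the empirical-process fluctuation $\sqrt{(\log n)/(nh_{2,n})}$, both equal to $((\log n)/n)^{6/13}$. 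The auxiliary bandwidths are similarly chosen so that the standard sup-norm rates
\begin{equation*}
\|\hat f(\cdot;h_{1,n})-f\|_\infty = O_{\rm a.s.}(((\log n)/n)^{2/5}),
\end{equation*}
\begin{equation*}
\|f_{G_1}(\cdot;h_{3,n})-f'\|_\infty = O_{\rm a.s.}(((\log n)/n)^{4/11}), \quad \|f_{G_2}(\cdot;h_{4,n})-f''\|_\infty = O_{\rm a.s.}(((\log n)/n)^{4/13}),
\end{equation*}
hold uniformly in $f\in{\cal P}_{C,6}$ (the last two requiring $G$ to be a fourth-order kernel).

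The bias term is the Jones--McKay--Hu (1994) computation: on ${\cal D}_r$ one has $c^{-2}f(t)\ge t_0$, so $\alpha(f(t))=f^{1/2}(t)$ lies in the linear part of $p$, and the choice of $\beta$ in (\ref{alphabeta}) is precisely what cancels the $h_{2,n}^4$ coefficient in the Taylor expansion of $\mathbb E f_{JKH}(t;h_{2,n})$ about $t$, leaving $O(h_{2,n}^6)$ with constants depending only on $C$, $r$, $c$, $t_0$ and the derivatives of $p$, hence uniform in $f\in{\cal P}_{C,6}$. The centering $f_{JKH}-\mathbb E f_{JKH}$ is handled by Talagrand's (1996) inequality applied to
\begin{equation*}
\mathcal F_n = \bigl\{\, x\mapsto h_{2,n}^{-1} K\bigl(h_{2,n}^{-1}(t-x)\gamma_{h_{2,n}}(x)\bigr)\gamma_{h_{2,n}}(x) \,:\, t\in\mathbb R \,\bigr\},
\end{equation*}
a VC-type class (indexed by the one-dimensional parameter $t$, with $\Phi$ Lipschitz) whose envelope is $O(h_{2,n}^{-1})$ and whose variance is $O(h_{2,n}^{-1})$ by a change-of-variables estimate using $\|f\|_\infty\le C$. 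Talagrand then delivers $\sqrt{(\log n)/(nh_{2,n})}=((\log n)/n)^{6/13}$ uniformly in $t$ and $f\in{\cal P}_C$, exactly as for the analogous class used in Theorem \ref{main0}.

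The main work and principal obstacle is the real--ideal discrepancy $\hat f-f_{JKH}$. I would Taylor-expand
\begin{equation*}
K\bigl(h_{2,n}^{-1}(t-X_i)\hat\gamma(X_i)\bigr)\hat\gamma(X_i) - K\bigl(h_{2,n}^{-1}(t-X_i)\gamma_{h_{2,n}}(X_i)\bigr)\gamma_{h_{2,n}}(X_i)
\end{equation*}
to second order in $\hat\gamma(X_i)-\gamma_{h_{2,n}}(X_i)$, and then expand $\hat\alpha-\alpha$ linearly in $\hat f-f$ and $\hat\beta-\beta$ linearly in $(\hat f-f,\,f_{G_1}-f',\,f_{G_2}-f'')$, using the smoothness of $\Phi$ and $p$. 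On the almost-sure event that the three preliminary estimators achieve the rates listed above, the second-order Taylor remainder admits a deterministic bound $O(\|\hat\gamma-\gamma_{h_{2,n}}\|_\infty^2) = O(((\log n)/n)^{4/5})$, comfortably below the target. The genuinely hard piece is the leading linear term: the $\hat f-f$ contribution has the form, up to a smooth prefactor $\chi_{t,n}$ involving $\alpha'$ and $\Phi'$,
\begin{equation*}
\frac{1}{n}\sum_{i=1}^n \chi_{t,n}(X_i)\bigl(\hat f(X_i;h_{1,n})-f(X_i)\bigr) = \frac{1}{n^2 h_{1,n}}\sum_{i,j}\chi_{t,n}(X_i)K\!\Bigl(\tfrac{X_i-X_j}{h_{1,n}}\Bigr) - \frac{1}{n}\sum_i \chi_{t,n}(X_i)f(X_i),
\end{equation*}
a second-order $V$-statistic indexed by $t\in{\cal D}_r$. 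I would apply its Hoeffding decomposition: the diagonal $i=j$ is $O(1/(nh_{1,n}))$, negligible; the two projection (degree-one) components are standard empirical processes in $t$, again bounded by Talagrand at the rate $((\log n)/n)^{6/13}$; and the fully degenerate canonical part is a canonical $U$-process of degree two, for which I would invoke Major's (2006) exponential inequality, in the spirit of Gin\'e and Mason (2007). The same scheme, with $\Phi'$ replaced by $\Phi''$ and the inner kernel $K$ by $G'$ or $G''$, handles the contributions from $f_{G_1}-f'$ and $f_{G_2}-f''$; the extra factors $h_{2,n}^{-1}$ or $h_{2,n}^{-2}$ that such derivatives bring in are absorbed by the sharper rates of these auxiliary estimators, so that each final piece is $O_{\rm a.s.}(((\log n)/n)^{6/13})$ uniformly in $f\in{\cal P}_{C,6}$. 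The passage from (\ref{h6main1}) to (\ref{h6main2}) is routine: since $\|\hat f(\cdot;h_{1,n})-f\|_\infty\to 0$ almost surely uniformly in $f\in{\cal P}_{C,6}$, we have $\hat{\cal D}_r^n\subseteq{\cal D}_r$ for all $n$ large, so (\ref{h6main2}) follows from (\ref{h6main1}).
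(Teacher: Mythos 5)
Your overall architecture coincides with the paper's: ideal bias (Corollary \ref{6}) plus ideal deviation from the mean (Proposition \ref{varid}) plus the real--ideal discrepancy, the latter handled by a second-order Taylor expansion in $\hat\gamma-\gamma$ followed by a Hoeffding decomposition of the resulting $V$-statistic, with Talagrand's inequality for the degree-one pieces and Major's inequality for the canonical piece. The genuine gap is in your claim that ``the two projection (degree-one) components are standard empirical processes in $t$, again bounded by Talagrand at the rate $((\log n)/n)^{6/13}$.'' The projection onto the first argument is not centered: after subtracting $\frac1n\sum_i\chi_{t,n}(X_i)f(X_i)$ it reduces, up to centering constants, to $\frac1n\sum_i\chi_{t,n}(X_i)b(X_i;h_{1,n})$ with $b(\cdot;h_{1,n})=E\hat f(\cdot;h_{1,n})-f$, i.e.\ precisely to the term (\ref{h6b}). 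Talagrand only controls its deviation from its mean, and the mean $E[\chi_{t,n}(X)b(X;h_{1,n})]$ is, by the obvious bound, of order $\|b\|_\infty=O(h_{1,n}^2)=((\log n)/n)^{2/5}$; since $2/5<6/13$, this is of strictly larger order than the target. Because the theorem fixes $h_{1,n}=((\log n)/n)^{1/5}$, you cannot undersmooth the preliminary estimator to make this crude bound sufficient, so an extra cancellation argument is indispensable and your sketch, as written, only yields the rate $((\log n)/n)^{2/5}$.

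The missing step is the one carried out in Subsection 3.1 and reused in Section 4: the linear prefactor is the kernel $L(z)=K(z)+zK'(z)$ of (\ref{L1Lfunctions}), which satisfies $\int L(z)dz=\int zL(z)dz=0$; changing variables through the inverse of $v\mapsto v\alpha(f(t-v))$ and using the symmetry of $K$, so that the inner smoothing error and its first two derivatives are all $O(h_{1,n}^2)$, one obtains (\ref{biasbias}), namely $\sup_t h_{2,n}^{-1}|EQ_1(t)|=O(h_{1,n}^2h_{2,n}^2)=O(((\log n)/n)^{36/65})=o(((\log n)/n)^{6/13})$, which together with Talagrand gives (\ref{bb}). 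Two minor remarks: your deterministic bound $O(\|\hat\gamma-\gamma\|_\infty^2)$ for the second-order remainder should carry the normalization factor $h_{2,n}^{-1}$, but $((\log n)/n)^{4/5-1/13}$ is still negligible; and for the $f_{G_1}-f'$ and $f_{G_2}-f''$ contributions the paper avoids a second $U$-statistic analysis altogether, pulling out $\|f_{G_1}-f'\|_\infty$ and $\|f_{G_2}-f''\|_\infty$ and exploiting the factor $h_{2,n}^2$ carried by these terms through $\hat\beta$, so that $h_{2,n}^2\|f_{G_2}-f''\|_\infty=O(((\log n)/n)^{6/13})$; your scheme there could be made to work, but it is more laborious and would again have to confront the non-centered (inner bias) projections rather than dispatch them with Talagrand alone.
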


It seems natural that Theorem  \ref{h6main0} extends to several dimensions, like Theorem \ref{main0}, but  the proof is already quite involved for $d=1$ and the practical value of this estimator is not proven (cf. Jones, McKay and Hu (1994)).

\section{Bias and variance of the ideal estimator}

In this section we consider the ideal estimators $f_{McK}$ and $f_{KJH}$ in several dimensions. We a) describe  the bias reduction for the ideal estimators, mainly following  McKay (1993a and b) (see also Jones,  McKay and Hu (1994)), and b) show that the uniform rate of concentration of the ideal estimators about their means, not surprisingly, turn out to be the same as those of regular kernel density estimators in $\mathbb R^d$ (Gin\'e and Guillou (2002); Deheuvels (2000) in one dimension). 

\subsection{Uniform bias expansions}

Our ideal estimator is
\begin{equation}\label{locationideal}
\bar{f}(t;h_n)=\bar f_n(t;h_n)=n^{-1}\sum_{i=1}^n h_n^{-d}\gamma_h^d(X_i)K(h_n^{-1}\gamma_h (X_i)(t-X_i)),\ \ t\in\mathbb R^d
\end{equation}
that is, we take $\Gamma=0$ in (\ref{generalideal}). Since $\gamma=\gamma_h$ may depend on $h$, in order to handle this carefully it is better to assume that $\gamma$ depends on another variable $\delta$ and eventually have $\delta=h$:
\begin{equation}\label{locationideal2}
\bar f_n(t;h,\delta)=n^{-1}\sum_{i=1}^n h^{-d}\gamma^d_\delta (X_i)K(h^{-1}\gamma_\delta (X_i)(t-X_i)),\ \ t\in\mathbb R^d.
\end{equation}
The general case (\ref{generalideal}) can be considered in exactly the same way, but formulas become more complicated.

The following proposition and its proof are contained in McKay (1993b, Theorems 2.10,  1.1 and 5.13) (see also Hall (1990), and particularly Jones,  McKay and Hu (1994, Theorem A.1) and McKay (1993b)). We sketch these authors' proof in the case $d=1$  for the reader's convenience.

Notation: we say that a function $g$ is in $C^l(\Omega)$ if itself and its first $l$ derivatives are bounded and uniformly continuous on $\Omega$.

More notation: for $v=(v_1,\dots,v_d)\in \mathbb N\cup\{0\})^d$, we set $|v|=\sum_{i=1}^dv_i$, $D_v:=D_{x_1}^{v_1}\circ\dots\circ D_{x_d}^{v_d}$,  $v!=v_1!\cdots v_d!$ and $\tau_v=\int_{\mathbb R^d} u_1^{v_1}\cdots u_d^{v_d}K(u)du$.

\begin{proposition} \label{bias0} {\rm(McKay (1993a,b))}
Let the kernel $K:\mathbb R^d\mapsto \mathbb R$ be symmetric about zero separately in each coordinate, have bounded support and integrate to 1. Assume the density $f$ is in $C^l(\mathbb R^d)$.  Assume $\gamma_\delta(t)\ge c>0$ for some $c>0$ and all $t\in \mathbb R^d$ and  $0\le \delta\le\delta_0$, for some $\delta_0>0$, and that the function $\gamma(t,\delta):=\gamma_\delta(t)$ is in $C^{l+1}(\mathbb R^d\times [0,\delta_0])$. Then we have
\begin{equation}\label{locationideal3}
E\bar f_n(t;h,\delta)=\sum_{k=0}^l a_{k,\delta}(t)h^k+o(h^l)
\end{equation}
as $h\to 0$, uniformly in $t\in\mathbb R^d$ and $0\le\delta\le\delta_1$ for some $\delta_1>0$, and the set of functions $a_{k,\delta}$, which are uniformly bounded and equicontinuous, are defined as 
\begin{equation}\label{coeff}
a_{2k+1,\delta}(t)=0,\ \ a_{2k,\delta}(t)=\sum_{|v|=2k}\frac{\tau_{v}}{v!}D_v\left(\frac {f(t)}{\gamma_{\delta}^{2k}(t)}\right),
\end{equation}
for $k\le l/2$, in particular, $a_{0,\delta}(t)=f(t)$.
\end{proposition}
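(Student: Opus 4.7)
The plan is to reduce the bias computation to one structural identity plus a standard Taylor expansion in $h$. First I would perform the change of variable $s=t-hu$ in the $ds$-integral defining $E\bar f_n(t;h,\delta)$ to obtain
$$E\bar f_n(t;h,\delta)=\int F(t-hu,u)\,du,\qquad F(x,u):=\gamma_\delta^d(x)f(x)K(\gamma_\delta(x)u).$$
Because $K$ has bounded support and $\gamma_\delta(x)\ge c>0$, the integrand is supported, in $u$, in a fixed ball of radius $T/c$ independent of $t,\delta,h$; this uniform compactness of the effective integration region is what will drive every subsequent uniform estimate.

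The key structural observation is this: for any fixed $x$ and any multi-index $v$, substituting $w=\gamma_\delta(x)u$ inside the $u$-integral yields
$$\int u^v F(x,u)\,du=\tau_v\,\frac{f(x)}{\gamma_\delta^{|v|}(x)}.$$
This one-line identity, a direct consequence of the self-scaling feature of the integrand (the weight $\gamma_\delta^d$ precisely balancing the Jacobian of $w=\gamma_\delta u$), is the entire reason the bias takes the form advertised in the proposition.

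From here the computation is mechanical: Taylor-expand $x\mapsto F(x,u)$ around $x=t$ up to order $l$ and integrate in $u$ term by term, producing
$$E\bar f_n(t;h,\delta)=\sum_{k=0}^{l}(-h)^k\sum_{|v|=k}\frac{1}{v!}\int u^v D_v F(t,u)\,du + o(h^l).$$
Interchanging $D_v$ with the $u$-integral (justified by compactness of the effective $u$-support and the assumed smoothness of $f$ and $\gamma_\delta$) turns the inner integral into $\tau_v D_v(f/\gamma_\delta^{|v|})(t)$. Since $K$ is symmetric separately in each coordinate, $\tau_v=0$ whenever some $v_i$ is odd, and in particular whenever $|v|$ is odd; this kills all odd-order coefficients and delivers the stated formula for $a_{2j,\delta}$.

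Uniformity in $t\in\mathbb R^d$ and $\delta\in[0,\delta_1]$ follows because the Taylor remainder is dominated by $h^{l+1}\sup_{x,u}|D_x^{l+1}F(x,u)|$ times the (fixed) volume of the $u$-support, a finite quantity under the hypotheses $f\in C^l$, $\gamma\in C^{l+1}(\mathbb R^d\times[0,\delta_0])$, $\gamma_\delta\ge c$; equicontinuity of $\{a_{k,\delta}\}$ is inherited from uniform continuity of $D_v(f/\gamma_\delta^{|v|})$ in $(t,\delta)$. The main delicate point, and the one I would expect to absorb most of the proof length, is justifying the term-by-term differentiation of $F(x,u)$ in $x$: since $K$ itself is not assumed smooth, the expansion must really be carried out at the level of the integrated functional $x\mapsto\int u^v F(x,u)\,du=\tau_v f(x)/\gamma_\delta^{|v|}(x)$, whose smoothness in $x$ is immediate from that of $f$ and $\gamma_\delta$, rather than pointwise in $u$. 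Equivalently, one may first change variables so that the $K$-factor becomes a function of an auxiliary variable $w$ alone, absorbing all $x$-dependence into the Jacobian and the $f,\gamma_\delta$ factors, and only then differentiate in $h$.
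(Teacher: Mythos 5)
Your reduction of the bias to the scaling identity $\int u^v F(x,u)\,du=\tau_v\,f(x)/\gamma_\delta^{|v|}(x)$ correctly isolates the structural reason the coefficients take the stated form, and it is essentially the computation behind the paper's identification of the $a_{k,\delta}$. But the step you call ``mechanical'' is where the proof actually lives, and as written it fails: $K$ is only assumed bounded, symmetric and of bounded support, so $x\mapsto F(x,u)=\gamma_\delta^d(x)f(x)K(\gamma_\delta(x)u)$ is not differentiable; the pointwise Taylor expansion of $F(\cdot,u)$ about $t$, the quantities $\int u^v D_vF(t,u)\,du$, and the remainder bound via $\sup_{x,u}|D_x^{l+1}F|$ (which would in any case need $l+1$ derivatives of $f$, while $f\in C^l$ only yields $o(h^l)$ through uniform continuity of the $l$-th derivatives) are all unavailable. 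You acknowledge the problem, but neither proposed repair closes it. Working ``at the level of the integrated functional'' does not help directly, because in $\int F(t-hu,u)\,du$ the first argument is $u$-dependent: knowing $x\mapsto\int u^vF(x,u)\,du$ for each fixed $x$ does not by itself produce an expansion of the integral in powers of $h$ --- that existence statement is precisely what has to be proved.

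Your second repair --- changing variables so that $K$ is evaluated at a new variable and all $h$-dependence sits in smooth factors --- is exactly the paper's route: it amounts to inverting $v\mapsto v\gamma_\delta(t-v)$ near $0$, uniformly in $(t,\delta)$, which in $d>1$ requires the implicit function theorem, and it is the reason $\gamma$ is assumed to be $C^{l+1}$ rather than $C^l$ (the Jacobian factor $dV_{t,\delta}(hz)/d(hz)$ must itself be expanded to order $l$). However, after this substitution the coefficient of $h^k$ appears as $\tau_k/k!$ times the $k$-th derivative at $0$ of the composite $z\mapsto\gamma_\delta(t-V_{t,\delta}(z))\,f(t-V_{t,\delta}(z))\,V_{t,\delta}'(z)$ (in $d=1$), and it is not at all automatic that this equals $\sum_{|v|=k}(\tau_v/v!)\,D_v\bigl(f/\gamma_\delta^{k}\bigr)(t)$. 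The paper supplies this identification by a separate argument: it integrates $E\bar f(\cdot;h,\delta)$ against a smooth compactly supported test function $\psi$, expands $\psi$, rescales by $w=u\gamma_\delta(s)$ (your scaling identity), integrates by parts, and compares coefficients of the two expansions. So your proposal contains the right insight for the form of the coefficients, but it lacks a valid proof both of the existence of the uniform expansion and of the link between the change-of-variables coefficients and the closed form; filling these gaps essentially forces you back to the paper's two-step argument.
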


\begin{proof} (For $d=1$.) The difference between the proof of this proposition in dimension 1 and in dimension $d>1$ is that in dimension 1 we can use positivity of the derivatives of a certain function in order to partially invert it, whereas in the case of $\mathbb R^d$ we need to use  the implicit function theorem. We refer to Lemma 2.11 in McKay (1993b) for the details in any dimensions, but as mentioned above, we only consider here the case $d=1$.
Since the functions $\gamma_\delta$ are bounded away from zero and their derivatives are bounded, there exists $\delta_1>0$ such that $\gamma_\delta(t-v)-v\gamma'_\delta(t-v)$ is bounded away from zero for all $t\in \mathbb R$, $\delta\in[0,\delta_0]$, and $v\in [-\delta_1,\delta_1]$. Hence,
for each $t\in\mathbb R$ and $0\le\delta\le \delta_0$, the function $v\mapsto U_{t,\delta}(v):=v\gamma_\delta(t-v)$ is invertible on the neighborhood $[-
\delta_1,\delta_1]$ of $v=0$.  These inverse functions, say $V_{t,\delta}(u)$,  are $l+1$ times differentiable with continuous derivatives, with respect to the three variables (this can be seen directly by differentiation, or using the implicit function theorem as in McKay (1993b) Theorem 2.10 and Lemma 2.11 for $ {\bf x}=(t,\delta)$). If the support of $K$ is $[-T,T]$ then $K(h^{-1}\gamma_\delta (s)(t-s))=0$ unless $|t-s|\le hT/c$. This implies that the change of variables
$$hz=(t-s)\gamma_\delta(t-(t-s)),\ {\rm that\ is}\ t-s=V_{t,\delta}(hz),$$ in the following integral is valid for all $h$ small enough
\begin{eqnarray*}E\bar f(t;h,\delta)&=&\frac{1}{h}\int\gamma_\delta(s)f(s)K\left(\frac{t-s}{h}\gamma_\delta(s)\right)ds\\
&=&-\int
\gamma_\delta(t-V_{t,\delta}(hz))f(t-V_{t,\delta}(hz))\frac{dV_{t,\delta}(hz)}{d(hz)}K(z)dz.
\end{eqnarray*}
 Now, the first statement in the proposition follows by developing the function $\gamma_\delta(t-V_{t,\delta}(hz))f(t-V_{t,\delta}(hz))\frac{dV_{t,\delta}(hz)}{d(hz)}$ into powers of $hz$ and integrating, on account of the compactness of the domain of integration ($z\in[-T,T]$) and the differentiability properties of $f$ and $\gamma_\delta$. (Note that the presence of $dV(hz)/d(hz)$ in the integrand requires that  the function $V$ be $l+1$ times differentiable in order to obtain differentiability of the integrand up to the $l$-th order, necessary for (\ref{locationideal3}).)

 Let $\psi$ be an infinitely differentiable function of bounded support. Then, changing variables ($t=s+hu$), developing $\psi$, changing variables once more ($w=u\gamma_\delta(s)$)) and  integrating by parts, we obtain
 \begin{eqnarray*}
 \int \psi(t)E\bar f(t;h,\delta)dt&=&\int\int \psi(s+hu)\gamma_\delta(s)f(s)K(u\gamma_\delta(s))dsdu\\
 &=&\sum_{k=0}^l\frac{h^k}{k!}\int\psi^{(k)}(s)\gamma_\delta(s)f(s)\left(\int u^kK(u\gamma_\delta(s))du\right)ds
 +o(h^l)\\
 &=&\sum_{k=0}^l\frac{\tau_kh^k}{k!}\int \psi^{(k)}(s)\frac{f(s)}{\gamma_\delta^k(s)}ds+o(h^l)\\
 &=&\int\psi(s)f(s)ds+\sum_{k=1}^l(-1)^k\frac{\tau_kh^k}{k!}\int \psi(s)\left(\frac{f(s)}{\gamma_\delta^k(s)}\right)^{(k)}ds+o(h^l),
 \end{eqnarray*}
 and note that, by symmetry, $\tau_k=0$ if $k$ is odd.
 But by (\ref{locationideal3}), $$ \int \psi(t)E\bar f(t;h,\delta)dt=\sum_{k=0}^lh^k\int \psi(t)a_{k,\delta}(t)dt+o(h^l),$$ and (\ref{coeff}) follows by comparing the coefficients of $h^l$ in both expansions.
 \end{proof}

 \smallskip

 With a slightly less simple proof, one can replace the bounded support hypothesis on $K$ by $\int(1+|x|^l)K(x)dx<\infty$, as done in the above mentioned references. 
\smallskip

\begin{corollary}\label{4}
{\rm (McKay (1993a,b))} Let $f$ be a  density in $C^4(\mathbb R^d)$, let $p$ be a  clipping function in $C^5(\mathbb R)$,  set $\alpha(f(t))=cp^{1/2}(c^{-2}f(t))$ for some $c>0$, and define $\bar f(t,h)$ by equation (\ref{locationideal}) with $\gamma(s)=\alpha(f(s))$, that is  $\bar f(t;h)=f_{McK}(t;h)$ (see (\ref{idealest})). Let ${\cal D}_r$ be as in
(\ref{region0}). Then,
\begin{equation}\label{idealbias4}
Ef_{McK}(t;h)=f(t)+\left(\sum_{|v|=4}\tau_vD_v(1/f)/v!\right)h^4
+o(h^4)
=f(t)+O\left(h^4\right)
\end{equation}
as $h\to0$, uniformly on ${\cal D}_r$.
\end{corollary}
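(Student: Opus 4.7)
The plan is to apply Proposition \ref{bias0} directly, with $l=4$ and $\gamma_\delta(t) = \alpha(f(t))$ independent of $\delta$, then simplify the two surviving coefficients $a_{2}, a_{4}$ on ${\cal D}_r$ using the clipping identity. The hypotheses of the proposition are immediate: $\alpha(f(t)) = cp^{1/2}(c^{-2}f(t)) \ge c > 0$ since $p \ge 1$, and the required smoothness of $\gamma$ follows from $p \in C^5(\mathbb R)$ (with $p \ge 1$ keeping $p^{1/2}$ smooth) composed with $f \in C^4(\mathbb R^d)$. Proposition \ref{bias0} then supplies
$$Ef_{McK}(t;h) = f(t) + a_{2}(t)\,h^2 + a_{4}(t)\,h^4 + o(h^4)$$
uniformly in $t \in \mathbb R^d$, with $a_{2k}(t) = \sum_{|v|=2k}(\tau_v/v!)\,D_v\!\left(f/\alpha^{2k}(f)\right)\!(t)$ and the odd-order coefficients vanishing by the coordinatewise symmetry of $K$.

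Next I would specialise to ${\cal D}_r$. For $t \in {\cal D}_r$ one has $f(t) > r > t_0 c^2$, so $c^{-2}f(t) > t_0$; because ${\cal D}_r$ is open and $f$ continuous, this strict inequality persists on a whole neighbourhood $V_t$ of $t$. On $V_t$ the clipping rule $p(s)=s$ for $s \ge t_0$ forces $\alpha^2(f) = c^2 p(c^{-2}f) = f$, so $f/\alpha^2(f) \equiv 1$ and $f/\alpha^4(f) \equiv 1/f$ on $V_t$. Hence the pointwise derivatives at $t$ of $f/\alpha^{2k}(f)$ coincide with the derivatives at $t$ of $1$ and of $1/f$ respectively, giving $a_{2}(t) = 0$ and $a_{4}(t) = \sum_{|v|=4}(\tau_v/v!)\,D_v(1/f)(t)$, which is the stated expansion.

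The sup-norm estimate then follows because the $o(h^4)$ error in Proposition \ref{bias0} is uniform on $\mathbb R^d$, while $\sum_{|v|=4}(\tau_v/v!)\,D_v(1/f)(t)$ is bounded on ${\cal D}_r$: there $1/f \le 1/r$, and the chain rule together with the uniform bounds on partials of $f$ up to order $4$ (granted by $f \in C^4(\mathbb R^d)$) and the lower bound $f \ge r$ controls all partials of $1/f$ up to order $4$. There is no real obstacle — the substance is all in Proposition \ref{bias0}; the only point requiring care is that derivatives at $t$ of $f/\alpha^{2k}(f)$ are being replaced by derivatives of $1$ and of $1/f$, which is legal precisely because the clipping cancellation holds on a full neighbourhood of $t$, thanks to the strict inequality $r > t_0 c^2$ built into the definition of ${\cal D}_r$.
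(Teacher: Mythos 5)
Your argument is correct and is essentially the paper's own proof: apply Proposition \ref{bias0} with $\gamma=\alpha(f)$ and note that on ${\cal D}_r$ the clipping is inactive, so $\alpha^2(f)=f$, giving $a_2\equiv 0$ and $a_4=\sum_{|v|=4}(\tau_v/v!)D_v(1/f)$. Your extra observation that the identity $\alpha^2(f)=f$ holds on a full neighbourhood of each $t\in{\cal D}_r$ (so that the derivatives in (\ref{coeff}) may indeed be computed from $1$ and $1/f$) is a point the paper leaves implicit, but it is the same argument.
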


\begin{proof}
For $x\in {\cal D}_r$, $\gamma(t)=cp^{1/2}(c^{-2}f(t))=f^{1/2}(t)$, so that, by equation (\ref{coeff}), $a_2(x)=0$ on ${\cal D}_r$. So, the corollary follows from the previous proposition.
\end{proof}

\begin{corollary}\label{6}
{\rm (McKay (1993a), Jones, McKay and Hu (1994))} Let $f$ be a density in $C^6(\mathbb R)$.  Let $p$ be a clipping function in $C^7(\mathbb R)$ and, for some $c>0$,  set $\alpha(f(t))=cp^{1/2}(c^{-2}f(t))$ and
$$\beta(t)=\frac{\tau_4[f^{\prime\prime}(t)f(t)-2(f^\prime(t))^2]}{24\tau_2\alpha^6(t)}.$$
Define
\begin{equation}\label{gdelta}
\gamma_\delta (t)=\frac{\alpha(t)}{1+\delta^2\beta(t)},
\end{equation}
and, consider $\bar f_n(t;h,h)$, the estimator defined by (\ref{locationideal2}) with this $\gamma_\delta$ and with $\delta=h$, that is $\bar f_n(t;h,h)=f_{JKH}(t;h)$ (see \ref{h6realest}).  Let ${\cal D}_r$ be as in
(\ref{region0}) for dimension $d=1$. Then,
\begin{eqnarray}\label{h6''}
Ef_{JKH}(t;h)&=&f(t)+h^6\left\{\frac{1}{2}\tau_2(\beta^2)^{\prime\prime}(t)+\frac{1}{6}\tau_4\left(\frac{\beta}{f}\right)^{(4)}(t)
+\frac{1}{720}\tau_6\left(\frac{1}{f^2}\right)^{(6)}(t)\right\}+o(h^6)\nonumber\\
&=&f(t)+O\left(h^6\right)
\end{eqnarray}
as $h\to0$, uniformly on ${\cal D}_r$.
\end{corollary}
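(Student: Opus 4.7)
The plan is to deduce Corollary \ref{6} from Proposition \ref{bias0} by substituting $\gamma_\delta(t)=\alpha(f(t))/(1+\delta^2\beta(t))$, setting $\delta=h$, and expanding each coefficient $a_{2k,h}$ as a polynomial in $h^2$ to pick out the surviving $h^6$ contribution. Applying Proposition \ref{bias0} with $d=1$ and $l=6$ gives
$$E f_{JKH}(t;h) \;=\; f(t) + \sum_{k=1}^{3} a_{2k,h}(t)\,h^{2k} + o(h^6),\qquad a_{2k,\delta}(t)=\frac{\tau_{2k}}{(2k)!}\frac{d^{2k}}{dt^{2k}}\!\left(\frac{f}{\gamma_\delta^{2k}}\right)\!(t).$$
The key simplification on ${\cal D}_r$ is that $f(t)>t_0 c^2$ forces $p(c^{-2}f(t))=c^{-2}f(t)$, hence $\alpha(f(t))=\sqrt{f(t)}$, and therefore $f/\gamma_h^{2k}=(1+h^2\beta)^{2k}/f^{k-1}$ for $k=1,2,3$.

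Expanding each $(1+h^2\beta)^{2k}$ in powers of $h^2$ and collecting all contributions to the bias through order $h^6$, one finds
\begin{align*}
a_{2,h}(t)h^2 &= \tau_2 h^4\beta''(t) + \tfrac{\tau_2}{2}h^6(\beta^2)''(t),\\
a_{4,h}(t)h^4 &= \tfrac{\tau_4}{24}h^4(1/f)^{(4)}(t) + \tfrac{\tau_4}{6}h^6(\beta/f)^{(4)}(t) + o(h^6),\\
a_{6,h}(t)h^6 &= \tfrac{\tau_6}{720}h^6(1/f^2)^{(6)}(t) + o(h^6).
\end{align*}
The whole point of the choice of $\beta$ is that the $h^4$ coefficient cancels on ${\cal D}_r$: since $\alpha^6(f)=f^3$ there, one has
$$\beta(t) \;=\; \frac{\tau_4[f''(t)f(t)-2(f'(t))^2]}{24\tau_2 f^3(t)} \;=\; -\frac{\tau_4}{24\tau_2}(1/f)''(t),$$
using the identity $(1/f)''=-(f''f-2(f')^2)/f^3$. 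Differentiating twice gives $\tau_2\beta''=-(\tau_4/24)(1/f)^{(4)}$, which exactly cancels the $h^4$ contribution from $a_{4,h}h^4$. The surviving $h^6$ coefficient is precisely the bracket in (\ref{h6''}), and uniformity on ${\cal D}_r$ follows from $f\ge r>0$, the boundedness of ${\cal D}_r$, and the uniform $o(h^6)$ statement in Proposition \ref{bias0}.

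The main obstacle is a smoothness mismatch: Proposition \ref{bias0} nominally asks for $\gamma_\delta\in C^{l+1}=C^7$, whereas $\beta$ contains $f''$ and hence, under $f\in C^6$, lies only in $C^4$. This is not a genuine obstruction because $\beta$ enters $\gamma_\delta$ only through the $\delta^2$-correction, so its $j$-th derivative contributes to the bias expansion only from order $h^{2j+2}$ onward; the highest $\beta$-derivative actually appearing in the final formula is $(\beta/f)^{(4)}$, which is available from $\beta\in C^4$. To make this precise I would revisit the change-of-variables and Taylor-expansion argument in the proof of Proposition \ref{bias0}, writing $\gamma_h=\alpha-h^2\alpha\beta/(1+h^2\beta)$: the $\alpha$-part, which is $C^6$ on ${\cal D}_r$, is expanded to order $h^6$, while the $\beta$-part is always weighted by a positive power of $h^2$ and need only be expanded to the orders its finite smoothness permits. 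This yields (\ref{h6''}) with a uniform $o(h^6)$ remainder on ${\cal D}_r$.
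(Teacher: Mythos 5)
Your proposal is correct and follows essentially the same route as the paper: the paper's proof simply observes that on ${\cal D}_r$ one has $\alpha(f(t))=f^{1/2}(t)$ and $\beta(t)=-\frac{\tau_4}{24\tau_2}(1/f)''(t)$ and then invokes Proposition \ref{bias0} directly, the expansion of the $h$-dependent coefficients $a_{2k,h}$ in powers of $h^2$ and the exact cancellation of the $h^4$ terms being precisely the computation you spell out. Your closing remark on the smoothness bookkeeping (only $\beta\in C^4$ versus the nominal $C^{l+1}$ hypothesis of Proposition \ref{bias0}) flags a point the paper passes over in silence, deferring to McKay (1993a,b) and Jones--McKay--Hu (1994), and your observation that $\beta$ enters only through the $h^2$-correction is the right reason it is harmless.
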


\begin{proof}  By definition,
$\alpha(f(t))=f^{1/2}(t)$ and $\beta(t)=\frac{\tau_4[f^{\prime\prime}(t)f(t)-2(f^\prime(t))^2]}{24\tau_2f^3(t)}=-\frac{\tau_4}{24\tau_2}\left(\frac{1}{f}\right)^{\prime\prime}(t)$ on ${\cal D}_r$, and the corollary follows by direct application of the previous proposition.
\end{proof}

\smallskip
The estimator for $l=6$ here is similar but slightly different from the one in Section 4.1 of Jones, McKay and Hu (1994)  (they defined $\gamma_\delta=\alpha(1+h^2\beta)$, but, it is somewhat more convenient for us to define it  by equation (\ref{gdelta})).

\subsection{Rate of uniform deviation from the mean}

The next proposition and its first proof have its origin in a result of Gin\'e and Guillou (2002) that obtains the almost sure exact discrepancy rate between kernel density estimators in $\mathbb R^d$ and their expected values, uniformly  on the whole space. A  proposition closer to the one below was proved in Gin\'e and Sang (2010) for the Hall-Hu-Marron estimator, and Mason and Swanapoel  (2010) (see also Mason (2010)) proved a general theorem that also yields the result, even with uniformity in bandwidth. Whereas the results in these references all imply (by exact analogy or as a direct consequence) the proposition below, we should emphasize that its proof consists of nothing but a direct and straightforward application of the famous Talagrand's exponential inequality, in the version in Einmahl and Mason (2000, inequality A.1 combined with Proposition A.1), and in 
Gin\'e and Guillou (2001, Proposition 2.2; 2002, Corollary 2.2), which turns out to be as well the main component in the proofs of all the above mentioned results. 

\begin{assumptions} \label{ass1}
The sequence $h_n$  has the form
\begin{equation}\label{band}
h_{n}=((\log n)/n)^\eta
\end{equation}
for some $0<\eta<1$. The kernel $K$ has the form $K(t)=\Phi(\|t\|^2)$, where $\Phi$ is bounded,  has support on $[0,T]$ for some $T<\infty$ and is of bounded variation and left or right continuous. $f$ is a bounded density function, and $\gamma_h(x)=\alpha(x)/(1+h^2\beta(x))$, where the functions $\alpha$ and $\beta$ are continuous and  bounded, $\alpha$ is bounded away from zero, and $0<h\le 1/(2\|\beta\|_\infty)^{1/2}$. The ideal estimator $\bar f(t;h_n)=\bar f_n(t)$ is defined, with these kernel, function $\gamma_h$ and bandwidths $h_n$, as in (\ref{locationideal}). 
\end{assumptions}

All these assumptions can be weakened, but this is all we need in this article.

We recall that $N({\cal K}, d,\varepsilon)$, the $\varepsilon$ covering number of the metric or pseudo-metric space $({\cal K},d)$,  is defined as the smallest number of  (open) $d$-balls of radius not exceeding $\varepsilon$ needed to cover $\cal K$. Also, a collection of measurable functions $\cal K$ on a measurable space $(S,{\cal S})$ is of $VC$ type relative to an envelope $F$ (a measurable function $F$ such that $F(s)\ge |f(s)|$ for all $s\in S$ and $f\in{\cal K}$) if there exist finite constants $A$, $v$ such that, for all probability measures $Q$ on $(S,{\cal S})$,
\begin{equation}\label{vapcer}
N({\cal K}, L_2(Q),\varepsilon)\le\left(\frac{A\|F\|_{L_2(Q)}}{\varepsilon}\right)^v, \ \ 0<\varepsilon\le 2\sup_{f,g\in{\cal K}}\|f-g\|_{L_2(Q)}.
\end{equation}
All but one among the classes of functions that we will consider in this article can be shown to be of $VC$ type using the following lemma, which is  a  variation on Lemma 4 of Gin\'e and Sang (2010), and whose idea comes from Nolan and Pollard (1987) (inexcusably, we failed to mention this article in Gin\'e and Sang (2010)).

\begin{lemma}\label{vc1}
 Let $K$, $f$  and $\gamma_h$  satisfy Assumptions \ref{ass1}. Let $\cal G$ be a uniformly bounded $VC$ type class of measurable functions on $\mathbb R^d$ with respect to a constant envelope $G$ and admitting constants $A_1$, $v_1$ in equation (\ref{vapcer}).
Let $\cal K$ be the class of functions
 \begin{equation}\label{classk0}
{\cal K}= \left\{K\left(\frac{t-\cdot}{h}\gamma_h(\cdot)\right)g(\cdot):t\in\mathbb{R}^d,~ 0<h<1/(2\|\beta\|_\infty)^{1/2}, ~g\in{\cal G}\right\}.
\end{equation}
Then, there exists a universal constant $R$ such that for every Borel probability measure $Q$ on $\mathbb R^d$,
\begin{equation}\label{ivc1}
N({\cal K},L_2(Q),\varepsilon)\le\left(\frac{(R\vee A_1)\|\Phi\|_VG}{\varepsilon}\right)^{8d+20+v_1}
\end{equation}
where $\|\Phi\|_V$ is the total variation norm of $\Phi$, that is, ${\cal K}$ is a bounded class of functions of VC type with envelope $\|\Phi\|_VG$ and admitting characteristic constants $A=R\vee A_1$ and $v=8d+20+v_1$,  independent of  $f$.
\end{lemma}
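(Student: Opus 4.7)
The plan is to follow the Nolan--Pollard (1987) scheme, as adapted in Lemma~4 of Gin\'e and Sang (2010), with the main new twist being the $h$-dependent rescaling $\gamma_h(x)=\alpha(x)/(1+h^2\beta(x))$. The proof splits into three conceptual steps: a bounded-variation decomposition of $\Phi$, a linearization of the resulting subgraphs, and an assembly via the sum and product rules for VC-type covering numbers.

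First decompose $\Phi=\Phi_1-\Phi_2$ with $\Phi_i$ bounded, nondecreasing, right-continuous, and $\|\Phi_i\|_\infty\le\|\Phi\|_V$, and set $\psi_{t,h}(x):=(\gamma_h(x)/h)^2\|t-x\|^2$, so that $K((t-x)\gamma_h(x)/h)=\Phi_1(\psi_{t,h}(x))-\Phi_2(\psi_{t,h}(x))$. It then suffices to bound covering numbers for each monotone class $\{\Phi_i\circ\psi_{t,h}\}$. For monotone $\Phi_i$ the subgraph of $\Phi_i\circ\psi_{t,h}$ in $(x,y)$-space equals $\{(x,y):\Phi_i^{-1}(y)<\psi_{t,h}(x)\}$; the quantitative hypothesis $h\le 1/(2\|\beta\|_\infty)^{1/2}$ makes $(1+h^2\beta(x))^2\ge 1/4>0$, so this denominator may be cleared without flipping the inequality. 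Expanding $\|t-x\|^2=\sum_j(t_j-x_j)^2$ and $(1+h^2\beta(x))^2=1+2h^2\beta(x)+h^4\beta^2(x)$, the subgraph becomes $\{F_{t,h}^{(i)}(x,y)>0\}$ for a function $F_{t,h}^{(i)}$ lying in the vector space of functions of $(x,y)$ spanned by
$$\alpha^2,\ \alpha^2 x_j\ (j=1,\dots,d),\ \alpha^2 x_j^2\ (j=1,\dots,d),\ \Phi_i^{-1}(y),\ \Phi_i^{-1}(y)\beta,\ \Phi_i^{-1}(y)\beta^2,$$
whose dimension is linear in $d$ and, crucially, independent of $(t,h)$.

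By the classical Vapnik--\v{C}ervonenkis/Dudley fact that positivity sets of a finite-dimensional vector space of real-valued functions form a VC class of index bounded by the dimension of the space, and by the standard covering-number estimate for VC subgraph classes (as in Einmahl--Mason (2000, Proposition~A.1) and Gin\'e--Guillou (2001, Proposition~2.2; 2002, Corollary~2.2)), each monotone piece satisfies $\sup_Q N(\{\Phi_i\circ\psi_{t,h}\},L_2(Q),\varepsilon\|\Phi\|_V)\le(A_0/\varepsilon)^{4d+10}$ for a universal $A_0$. The difference rule $N(\mathcal{F}_1-\mathcal{F}_2,\varepsilon)\le N(\mathcal{F}_1,\varepsilon/2)N(\mathcal{F}_2,\varepsilon/2)$ then combines the two pieces into $\sup_Q N(\{\Phi\circ\psi_{t,h}\},L_2(Q),\varepsilon\|\Phi\|_V)\le(A_1/\varepsilon)^{8d+20}$, and the standard product rule for VC-type classes multiplies by $\mathcal{G}$ (which contributes constant envelope $G$ and exponent $v_1$), yielding the stated bound with $A=R\vee A_1$.

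The main obstacle is the linearization step: one must realize the subgraph as a positivity set of a function in a fixed vector space whose dimension does \emph{not} depend on $h$. Clearing the $h$-dependent denominator requires the bound $h\le 1/(2\|\beta\|_\infty)^{1/2}$ -- without it the denominator could vanish and the whole scheme breaks -- and expanding $(1+h^2\beta)^2$ is what forces $\beta$ and $\beta^2$ into the basis. Tracked together with the $\alpha^2 x_j$ and $\alpha^2 x_j^2$ contributions coming from $\|t-x\|^2$, this pins down the dimension count and hence the exponent $8d+20$; the remaining covering-number manipulations are standard VC calculus and careful bookkeeping.
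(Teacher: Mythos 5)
Your proposal is correct and follows essentially the same route as the paper's proof: the same decomposition $\Phi=\Phi_1-\Phi_2$ into monotone pieces, the same clearing of the denominator $(1+h^2\beta)^2$ so that the subgraphs become positivity sets of the fixed $(2d+4)$-dimensional span of $\alpha^2$, $x_j\alpha^2$, $x_j^2\alpha^2$, $\Phi_i^{-1}(u)$, $\Phi_i^{-1}(u)\beta$, $\Phi_i^{-1}(u)\beta^2$, then Dudley's positivity-set result, the Dudley--Pollard entropy bound giving exponent $4d+10$ per piece, and the difference/product covering-number arithmetic with $\mathcal G$ to reach $8d+20+v_1$. The only cosmetic difference is that the paper perturbs the variation functions to make the $\Phi_i$ strictly increasing so that $\Phi_i^{-1}$ is a genuine inverse, whereas you invoke a generalized inverse of a monotone $\Phi_i$; both handle the same minor technicality.
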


\begin{proof}
By adding an arbitrarily small strictly increasing function to the positive and negative variation functions of $\Phi$, we have $\Phi=\Phi_1-\Phi_2$ with $\Phi_i$ strictly increasing, positive and bounded, with $\|\Phi_1\|_\infty$ ($\|\Phi_2\|_\infty$) arbitrarily close to the positive (negative) variation of $\Phi$. For $i=1, 2$, let ${\cal K}_i$ be the classes of functions
$${\cal K}_i=\left\{\Phi_i\left(\frac{\|t-\cdot\|^2}{h^2}\gamma_h^2(\cdot)\right):t\in\mathbb{R}^d, 0<h<\infty\right\}.$$
Then the subgraphs of the functions in the class ${\cal K}_i$ have the form
\begin{equation*}
\left\{(x,u):\Phi_i\left(\frac{\|t-x\|^2}{h^2}\gamma_h^2(x)\right)\ge u \right\}
=\left\{(x,u):\frac{\|t-x\|^2}{h^2}\alpha^2(x)-\Phi_i^{-1}(u)(1+h^2\beta(x))^2
\ge0\right\},
\end{equation*}
and so they are the positivity sets of  functions  from the linear space of functions of the two variables $u$ and $x$ spanned by $\alpha^2(x)$, $x_j\alpha^2(x)$ for $j=1,\dots,d$,  $x_j^2\alpha^2(x)$ for $j=1,\dots,d$, $\Phi_i^{-1}(u)$, $\Phi_i^{-1}(u)\beta(x)$ and $\Phi_i^{-1}(u)\beta^2(x)$.
 Hence, by a result of Dudley (1978), the subgraphs of ${\cal K}_i$ are VC of index $2d+5$.
Therefore, by the Dudley-Pollard entropy bound for VC-subgraph classes, e.g.  Pollard (1984), we have
\begin{equation}\label{vc2}
N({\cal K}_i, L_2(Q), \varepsilon)\le \left(\frac{R\|\Phi_i\|_\infty}{\varepsilon}\right)^{4d+10},\ \ 0<\varepsilon\le \|\Phi_i\|_\infty,\ \ i=1,2,
\end{equation}
where $R$ is a universal constant. 

Now, any $H\in{\cal K}$ can be written as $H=k_1g-k_2g$ for $k_i\in{\cal K}_i$ and $g\in{\cal G}$, so that, for any probability measure $Q$ we have
\begin{eqnarray*}Q(H-\bar H)^2&=&Q((k_1-k_2)g-(\bar k_1-\bar k_2)\bar g)^2\\
&\le&4G^2Q(k_1-\bar k_1)^2+4G^2Q(k_2-\bar k_2)^2+2\|K\|_V^2Q(g-\bar g)^2.
\end{eqnarray*}
Given $\varepsilon>0$ let $\delta_1=\varepsilon/(\sqrt{12}G)$ and $\delta_2=\varepsilon/(\sqrt{6}\|K\|_V)$.
Then, if the collections of functions $k_1^{(1)},\dots, k_{N_1}^{(1)}$ and $k_1^{(2)},\dots, k_{N_2}^{(2)}$ are $L_2(Q)$ $\delta_1$-dense respectively in the classes ${\cal K}_1$, ${\cal K}_2$, and $g_1,\dots,g_{N_3}$ are $L_2(Q)$ $\delta_2$-dense  in the class $\cal J$, with optimal cardinalities $N_i=N({\cal K}_i, L_2(Q),\delta_1)$, $i=1,2$, and $N_3=N({\cal G},L_2(Q),\delta_2)$, then, by the previous inequality, the functions $(k_i^{(1)}-k_j^{(2)})g_l$ are $L_2(Q)$ $\varepsilon$-dense in $\cal F$ . Since there are at most $N_1N_2N_3$ such functions (this estimate may not be optimal), the inequality  (\ref{ivc1}) follows.
\end{proof}

\medskip

This lemma is important for us because it allows direct application of  the theorem in Mason and Swanepoel (2010)) or, what is more natural in our case, direct use of a version of Talagrand's (1996) inequality e.g. in the form given in Einmahl and Mason (2000) or in Gin\'e and Guillou (2001, 2002),  to the effect that, if $P$ is a probability measure on a measurable space $(S,{\cal S})$ and  $X_i:S^{\mathbb N}\mapsto S$ are the coordinate functions of $S^{\mathbb N}$, which are i.i.d. with law $P$, and if
 a class $\cal F$ of functions is bounded, countable and of VC type for an envelope $F$,  then there exist $0<C_i<\infty$, $1\le i\le 2$, depending only on $v$ and $A$ such that, for all $\lambda \ge 1\vee 2C_1$ and all $t$ satisfying
 \begin{equation} \label{talc}
 C_1\sqrt{n}\sigma\sqrt{\log\frac{2\|F\|_\infty}{\sigma}}\le t\le \frac{\lambda n\sigma^2}{\|F\|_\infty},
 \end {equation}
 we have
 \begin{equation}\label{tal}
\Pr\left\{\sup_{g\in{\cal F}}\left|\sum_{i=1}^n(g(X_i)-Pg)\right|>t\right\}\le C_2\exp\left(-\frac{t^2}{C_2\lambda n\sigma^2}\right),
\end{equation}
where
$$\|F\|_\infty\ge \sigma^2\ge \sup_{g\in {\cal F}}{\rm Var}_P(g).$$
The class $\cal K$ is not countable, but the continuity properties of the functions defining it imply that the sup over $g\in {\cal K}$ of $\left|\sum_{i=1}^k(g(X_i)-Pg)\right|$ is in fact a countable supremum. Whenever this will happen in this article we will say that the class is {\it measurable}.

\begin{proposition}\label{varid}
Under the hypotheses in Assumptions \ref{ass1},
\[
\sup_{t\in \mathbb{R}^d}|\bar{f}(t;h_n)-E\bar{f}(t;h_n)|=||\bar{f}_n-E\bar{f}_n||_{\infty}=O_{\rm a.s.}\left(\sqrt{\frac{\log n}{n h_{n}^d}}\right)
\]
uniformly over all densities $f$ such that $\|f\|_\infty\le C$, for any $0<C<\infty$, that is, there exists $L<\infty$ such that, if ${\cal P}_C$ is the set of these densities, then
$$\lim_{k\to0}\sup_{f\in{\cal P}_C}{\Pr}_f\left\{\sup_{n\ge k}\sqrt{\frac{n h_{n}^d}{\log n}} ||\bar{f}_n-E\bar{f}_n||_{\infty}>L\right\}=0.$$

\end{proposition}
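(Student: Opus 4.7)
The plan is to apply Talagrand's exponential inequality (\ref{tal}) directly to the centered empirical process indexed by the class $\mathcal{K}_n := \{g_{t,n}: t\in\mathbb{R}^d\}$, where
\[
g_{t,n}(x) := K\bigl(h_n^{-1}\gamma_{h_n}(x)(t-x)\bigr)\gamma_{h_n}^d(x),
\]
so that $nh_n^d\bigl(\bar f_n(t;h_n) - E\bar f_n(t;h_n)\bigr) = \sum_{i=1}^n (g_{t,n}(X_i) - Pg_{t,n})$. Measurability of $\mathcal{K}_n$ in the countable-supremum sense follows from the one-sided continuity of $\Phi$, which is inherited by $t\mapsto g_{t,n}(x)$ off a null set, so the supremum over $t\in\mathbb{R}^d$ reduces to a countable supremum.

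First I verify that $\mathcal{K}_n$ is of VC type with characteristic constants independent of $f$ and $n$. Applying Lemma \ref{vc1} with $\mathcal{G} = \{\gamma_{h_n}^d\}$ (a singleton, hence $v_1=0$) and constant envelope $G_0^d$, where $G_0 := 2\|\alpha\|_\infty$, produces a constant envelope $F_0 := \|\Phi\|_V G_0^d$ and fixed VC parameters $(A,v)$. Here I use that the constraint $h \le 1/(2\|\beta\|_\infty)^{1/2}$ forces $1+h^2\beta \ge 1/2$, so $\gamma_h \le 2\|\alpha\|_\infty$; the same constraint implies $\gamma_h$ is bounded below by some $c_0>0$ uniformly in the admissible range of $h_n$. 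Since $K$ is supported in $\|u\|\le T$, the function $g_{t,n}$ vanishes off $\|t-x\|\le Th_n/c_0$, so
\[
\mathrm{Var}_f\bigl(g_{t,n}(X)\bigr) \le E_f g_{t,n}^2 \le F_0^2\,\|f\|_\infty\,\omega_d\,(T/c_0)^d\,h_n^d =: M h_n^d,
\]
with $M$ depending only on $C$ and the fixed constants; this gives $\sigma^2 = M h_n^d$ (and $\sigma^2 \le F_0^2$ for $n$ large).

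It then remains to apply (\ref{tal}) with $t_n := L\sqrt{nh_n^d\log n}$ for $L$ large. The lower endpoint of (\ref{talc}) is of order $\sqrt{nh_n^d\log n}$ since $\log(2F_0/\sigma) = O(\log n)$, so for $L$ large enough it is met; the upper endpoint $\lambda n\sigma^2/F_0 \asymp nh_n^d$ dominates $t_n$ in the nontrivial regime $nh_n^d/\log n \to\infty$. Talagrand's inequality then yields, uniformly in $f\in\mathcal{P}_C$,
\[
\Pr_f\Bigl\{\bigl\|\bar f_n - E\bar f_n\bigr\|_\infty > L\sqrt{\log n/(nh_n^d)}\Bigr\} \le C_2 n^{-\rho(L)},
\]
with $\rho(L)\to\infty$ as $L\to\infty$. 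Choosing $L$ so that $\rho(L)>1$, the uniform $O_{\rm a.s.}$ conclusion in the sense defined in the text follows: the probability that the normalized deviation exceeds $L$ at some $n\ge k$ is bounded by $\sum_{n\ge k} C_2 n^{-\rho(L)}\to 0$ as $k\to\infty$, independently of $f$. The only technical point is the verification of the admissible range (\ref{talc}) while keeping all constants $f$-independent; given Lemma \ref{vc1} this is routine, and there is no serious obstacle.
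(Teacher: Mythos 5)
Your proposal is correct and follows essentially the same route as the paper: Lemma \ref{vc1} to get a VC class with $f$- and $n$-independent constants, a variance bound of order $h_n^d$, and Talagrand's inequality (\ref{tal}) at the scale $t\asymp\sqrt{nh_n^d\log n}$ followed by summation over $n\ge k$ uniformly in $f\in{\cal P}_C$. The only differences (taking $\cal G$ to be the singleton $\{\gamma_{h_n}^d\}$ rather than the class over all admissible $h$, and bounding the variance via the support of $K$ instead of a change of variables) are cosmetic.
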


\begin{proof} We have:
\begin{eqnarray}\label{eq1}
\Pr\left\{\sqrt{\frac{n h_{n}^d}{\log h_{n}^{-1}}}
||\bar{f}_n-E\bar{f}_n||_{\infty}>\lambda\right\} 
&=&\Pr\Biggr\{
\sup_{ {t\in \mathbb{R}^d} }\left|\sum_{i=1}^{n}\left[
K\left(\frac{t-X_i}{h}\gamma_h(X_i)\right)\gamma_h^d(X_i)
\right.\right.{}\\
&&\left.\left.{} -EK\left(\frac{t-X_i}{h}\gamma_h(X_i)\right)\gamma_h^d(X_i)\right]\right|
>\lambda\sqrt{n h_{n}^d\log n}\Biggr\}\notag
\end{eqnarray}
for any $\lambda>0$. By Lemma \ref{vc1}, for $0<h\le 1/(2\|\beta\|_\infty)^{1/2}$, the subclasses
\begin{equation}
\mathcal{F}_{h}=\left\{K\left(\frac{t-\cdot}h\gamma_h(\cdot)\right)\gamma_h^d(\cdot):t\in \mathbb{R}^d \right\} \label{entr0}
\end{equation}
are measurable VC classes of functions with respect to the  envelope $U=2^d\|K\|_V\|\alpha\|_\infty^d$, and admitting constants $A$ and $v$ independent of $h$: notice that the class of functions ${\cal G}=\{\gamma_h^d: 0<h\le 1/(2\|\beta\|_\infty)^{1/2}\}$, is bounded by $2^d\|\alpha\|_\infty^d$ and is  is clearly of VC type with $v=1$ since $\left|\left(\frac{\alpha(x)}{1+h^2\beta(x)}\right)^d-\left(\frac{\alpha(x)}{1+\bar h^2\beta(x)}\right)^d\right|\le d2^{d+1}\|\alpha\|_\infty^d(\|\beta\|_\infty)^{1/2}|h-\bar h|$; hence, the class of functions  $\cal K$ defined as in (\ref{classk0}) using  this $\cal G$ and the kernel $K$ and the functions $\gamma_h$ from this proposition, is 
$VC$ by Lemma \ref{vc1} and, since it contains  the classes ${\cal F}_h$, so are these classes, with the same $A$ and $v$ as $\cal K$. The continuity properties of $K$ and $\gamma_h$ imply they are measurable. Hence,
Talagrand's inequality (\ref{tal}) applies to the classes (\ref{entr0}), and in order to apply it 
 we only need a sensible bound $\sigma^2_h$ for the maximum variance of the functions in each class ${\cal F}_h$.  Since $0<\nu:=(2/3)\inf_x\alpha(x)\le \|\gamma_h\|_\infty\le 2\|\alpha\|_\infty<\infty$, we have
 \begin{eqnarray}\label{varest}
\int_{\mathbb{R}^d}K^{2}\left(\frac{t-x}{h}\gamma(x)\right) \gamma^{2d}(x)f(x)dx
&=&h^d\int_{\mathbb{R}^d}K^{2}\left(u\gamma(t-hu)\right)\gamma^{2d}(t-hu)f(t-hu)du\nonumber\\
&\le &
2^d(2T^{1/2}/\nu)^{d}||K||_\infty^2||f||_\infty||\alpha||_\infty^{2d} h^d.\label{var2}
\end{eqnarray}
So, assuming $f\in{\cal P}_C$, we can take $\sigma_h^2:=C(K,\alpha)Ch^d$ with $C(K,\alpha)=2^d(2T^{1/2}/\nu)^{d}||K||_\infty^2||\alpha||_\infty^{2d}$.
Take now $h=h_n$. The envelope
$U_{h_n}$ of ${\cal F}_{h_n}$ can be taken to be the constant $U$ above, hence it is eventually much larger than $\sigma_{h_n}$ and,  by (\ref{band}), we also have
$\sqrt{n}\sigma_{h_n}\sqrt{\log (U/\sigma_{h_n})}<<n\sigma_{h_n}^2$
(here and elsewhere, the sign $<<$ should be read as `of smaller order than' when the indexing variable, in this case $k$, tends to infinity). If $C_1$ and $C_2$ are the constants in Talagrand's inequality (\ref{tal})  common to all the classes ${\cal F}_h$, it is then clear that there is $n\ge n_0$, $n_0$ large enough, so that there exists $\lambda>0$ such that simultaneously,
\begin{equation}\label{tcond}
C_1\sqrt{n}\sigma_{h_n}\sqrt{\log \frac{AU}{\sigma_{h_n}}}<\lambda\sqrt{nh_n^d\log n}<<n\sigma_{h_n}^2,
\end{equation}
for all $n\ge n_0$, and $\lambda>\sqrt{C_2C(K,\alpha)C}$ (note that $\log(AU/\sigma_{h_n}$ is of the order of a constant times $\log n$). Then, applying Talagrand's inequality (\ref{tal})  to the empirical process in (\ref{eq1})  gives
\begin{equation}\label{tineq}
\sum_n\sup_{f\in{\cal P}_C}\Pr\left\{\sqrt{\frac{n h_{n}^d}{\log n}}
||\bar{f}_n-E\bar{f}_n||_{\infty}>\lambda\right\}\le C_2\sum_n \exp\left(-\frac{\lambda^2 \log n}{C_2C(K,\alpha)C }\right)<\infty.
\end{equation}
This proves the proposition.
\end{proof}

\medskip
Here is an alternative proof: 

\medskip
\begin{proof} The facts  that the union of the classes ${\cal F}_h$, $0<h<1/(2\|\beta\|_\infty)^{1/2}$, is VC bounded and measurable, and that inequality (\ref{varest}) holds, verify that  the class of functions
$\{K((t-\cdot)h^{-1}\gamma(\cdot))\gamma^d(\cdot):t\in\mathbb R^d,0<h<1/(2\|\beta\|_\infty)^{1/2}\}$ satisfies the hypotheses of the general theorem in Mason and Swanepoel (2010), and their result then implies a stronger version of Proposition \ref{varid} with $h_n$ replaced by $h$ and with uniformity in $h$ within a range that includes $h_n$. Their theorem is not stated with uniformity in $f\in{\cal P}_C$, but the inequalities used in their proof imply it. \end{proof}

\medskip
Corollary \ref{4} in Subsection 2.1 shows that  the bias of the ideal estimator $f_{McK}(t;h_{2,n})$ from (\ref{idealest}) is of the order of $h_{2,n}^4$ uniformly in $t\in {\cal D}_r$ and in $f\in{\cal P}_{C,4}$,  and Proposition  \ref{varid} (with $\beta\equiv 0$ in the definition of $\gamma_h$) gives that the uniform deviation from its mean,  $\sup_{t\in \mathbb{R}^d}|f_{McK}(t;h_{2,n})-Ef_{McK}(t;h_{2,n})|$, has order $O_{\rm a.s.}\left(\sqrt{\frac{\log n}{n h_{2,n}^d}}\right)$ uniformly in $t\in \mathbb R^d$ and in $f\in{\cal P}_C$ (any $0<C<\infty$). Hence, they are of the same order for $h_{2,n}=((\log n)/n)^{1/(8+d)}$, and we have
\begin{equation}\label {idealrateK}
\sup_{t\in {\cal D}_r}|f_{McK}(t;h_{2,n})-f(t)|=O_{\rm a.s.}\left(((\log n)/n)^{4/(8+d)}\right)\ \ {\rm uniformly\ in}\ f\in{\cal P}_{C,4}.
\end{equation}
Likewise, Corollary \ref{6} and Proposition  \ref{varid} give that, for $h_{2,n}=((\log n)/n)^{1/13}$
\begin{equation}\label{idealrateJ}
\sup_{t\in {\cal D}_r}|f_{JKH}(t;h_{2,n})-f(t)|=O_{\rm a.s.}\left(((\log n)/n)^{6/13}\right)\ \ {\rm uniformly\ in}\ f\in{\cal P}_{C,6}.
\end{equation}

It is this balance between the bias and the random centered (variance) components of the difference $f_{McK}(t;h_{2,n})-f(t)$ (or $f_{JKH}(t;h_{2,n})-f(t)$) that prevents us from taking advantage  of the uniformity in bandwidth in the Mason-Swanepoel (2010) theorem. Note also that applying this result as in the second proof of Proposition \ref{varid} and applying Talagrand's inequality (\ref{tal}) as in its first proof,  both require checking exactly the same facts (namely, measurability, boundedness and VC character of a class of functions, plus a bound for the variances of the functions in the class). For these two reasons we continue using Talagrand's inequality in the few instances below where either works.

\section{Estimation of densities in $C^4(\mathbb R^d)$: Proof of Theorem \ref{main0}}
In this section we develop the proof of Theorem \ref{main0}. The pattern of proof is similar to that of the main results in Hall and Marron (1988), corrected in Hall, Hu and Marron (1995), and particularly  in Gin\'e and Sang (2010), but the details are quite different.

We make the following assumptions on the kernel $K$, the clipping function $p$, the densities $f$ and the bandsequences:

\medskip
\begin{assumptions}\label{ass3}

The kernel $K$ is assumed to satisfy all the conditions in Proposition \ref{bias0} and Assumptions \ref{ass1},   and to have, besides, uniformly bounded second order partial derivatives.  We also assume that the densities $f$ are bounded and have at least four bounded and uniformly continuous derivatives, that is, $f\in{\cal P}_{C,4}$ for some $C<\infty$. The nondecreasing clipping function $p:\mathbb {R}\rightarrow \mathbb {R}$ is assumed to have two bounded derivatives, $p(s)\ge 1$ for all $s$ and $p(s)=s$ for all $s\ge t_0\ge 1$.  Here $c$ and $t_0$ are fixed constants.We set $h_{1,n}=((\log n)/n)^{1/(4+d)}$ and $h_{2,n}=((\log n)/n)^{1/(8+d)}$, $n\in\mathbb N$.
 \end{assumptions}

Following Hall and Marron (1988), we compare the ideal estimator (\ref{idealest}) with the real one (\ref{realest}), with some changes similar to the ones introduced in  Gin\'e and Sang (2010), particularly, the use of inequalities from empirical and $U$-processes. Proving the theorem in $\mathbb R^d$ for any $d>0$ requires more precision than in dimension 1, in particular we cannot undersmooth the preliminary estimator and we must proceed differently with several estimations.

By (\ref{idealrateK}, in order to obtain a uniform convergence rate of $((\log n)/n)^{4/(8+d)}$  for teh difference between the true estimator (\ref{realest}) and the density $f(t)$ we only need to show that the uniform convergence rate of the difference between the true estimator (\ref{realest}) and the ideal estimator (\ref{idealest}) is at most of this order.

Recall  $\alpha(t):=cp^{1/2}(c^{-2}t)$. Define
$\delta(t)=\delta(t,n)$ by the equation
\begin{equation}\label{delta}
\delta(t)=\frac{\alpha(\hat f(t;h_{1,n}))-\alpha (f(t))}{\alpha(f(t))}=\frac{p(c^{-2}\hat f(t;h_{1,n}))-p(c^{-2}f(t))}{p^{1/2}(c^{-2}f(t))[p^{1/2}(c^{-2}\hat f(t;h_{1,n}))+p^{1/2}(c^{-2}f(t))]},
\end{equation}
so that
\begin{equation}\label{alphahat}
\alpha(\hat f(t;h_{1,n}))=\alpha(f(t))(1+\delta(t)).
\end{equation}

Since $p$ is a Lipschitz function and $p\ge 1$,
\begin{equation}\label{deltabd}
|\delta(t)|\le B c^{-2}|\hat f(t;h_{1,n})-f(t)|
\end{equation}
for a constant $B$ that depends only on $p$. Set
$$D(t;h_{1,n})=\hat f(t;h_{1,n})-E\hat f(t;h_{1,n})\ \ {\rm and}\ \ b(t;h_{1,n})=E\hat f(t;h_{1,n})-f(t)$$
and note that
\begin{equation}\label{classic1}
\|D(\cdot;h_{1,n})\|_\infty=O_{a.s.}\left(\sqrt{\frac{\log h_{1,n}^{-1}}{nh_{1,n}^d}}\right)\ \ {\rm uniformly\ in}\ f\in{\cal P}_C
\end{equation}
for all $0<C<\infty$ by a result of  Gin\'e and Guillou (2002), and that
\begin{equation}\label{classic2}
 \|b(\cdot;h_{1,n})\|_\infty=O_{\rm a.s.}(h_{1,n}^2)\ \ {\rm uniformly\ in}\ f\in{\cal P}_{C,2}
\end{equation}
by the classical bias computation for symmetric kernels. Then
we have, by (\ref{deltabd}), (\ref{classic1}) and (\ref{classic2}),
\begin{equation}\label{zero}
\sup_{t\in \mathbb{R}^d}|\delta(t)|=O_{\rm a.s.}(h_{1,n}^2)=o_{\rm a.s.}(1)\ \ {\rm uniformly\ in}\ f\in{\cal P}_{C,2}.
\end{equation}
We also have, for further use,
\begin{eqnarray}\label{deltatalor}
\delta(t)&=&\frac{\alpha(\hat f(t;h_{1,n}))-\alpha(f(t))}{\alpha(f(t))}\nonumber\\
&=&\frac{\alpha'(f(t))[\hat f(t;h_{1,n})-f(t)]
}{\alpha(f(t))}
+\frac{\alpha''(\eta)[\hat f(t;h_{1,n})-f(t)]^2}{2\alpha(f(t))}\end{eqnarray}
where $\eta=\eta(t)\ge 0$ is between $\hat f(t;h_{1,n})$ and $f(t)$ (so, not only it depends on $t$ but also on $f$ and on the whole sample. Note that, since $p\ge 1$ and $p'$ and $p''$ are uniformly bounded on $[0,\infty)$, we have $|\alpha''(\eta( t,h_{1,n}))|\le c^{-3}A$ for some constant $A$ that does not depend on $n$ or $t$ but only on $p$.  It is convenient as well to record the following expansion of $\alpha^d(\hat f)$ implied by (\ref{alphahat}) and (\ref{zero}):
\begin{equation}\label{delta1}
\alpha^d(\hat f(t;h_{1,n}))=\alpha^d(f(t))(1+d\delta(t))+\delta_1(t)
\end{equation}
with\begin{equation}\label{delta1'}
\|\delta_1\|_\infty=O_{\rm a.s.}(\|\delta\|_\infty^2)\ \ {\rm uniformly\ in}\ f\in{\cal P}_{C.2},
\end{equation}
hence, by (\ref{deltabd}) and (\ref{zero}), 
\begin{equation}\label{delta1bd}
\|\delta_1\|_\infty=O_{\rm a.s.}(\|\hat f_n(\cdot;h_{1,n})-f(\cdot)\|_\infty^2)\ \ {\rm uniformly\ in}\  f\in{\cal P}_{C,2}.
\end{equation}
For the kernel $K$ we have the expansion
\begin{eqnarray}\label{Kexp}
K\left(\frac{t-X_i}{h_{2,n}}\alpha(\hat f(X_i;h_{1,n}))\right)\!\!\!&=&\!\!\!K\left(\frac{t-X_i}{h_{2,n}}\alpha(f(X_i))\right)
\\
&&\!\!+\sum_{j=1}^dK'_j\left(\frac{t-X_i}{h_{2,n}}\alpha(f(X_i))\right)\frac{(t-X_i)_j}{h_{2,n}}\alpha(f(X_i))\delta(X_i)+\delta_2(t;X_i)\notag
\end{eqnarray}
with 
$$\delta_2(t,x):=\sum_{j,\ell=1}^dK''_{j,\ell}(\xi)\frac{(t-x)_j(t-x)_\ell}{2h_{2,n}^2}\alpha^2(f(x))\delta^2(x),$$
where $\xi$ is a (random) point in the line connecting the points $\frac{t-X_i}{h_{2,n}}\alpha(f(X_i))$ and
$\frac{t-X_i}{h_{2,n}}\alpha(f(X_i))+
\frac{t-X_i}{h_{2,n}}\alpha(f(X_i))\delta (X_i),$ as before. $K$ having compact support, $\alpha$ being bounded from below by $c$ (and above on bounded sets) and $\delta$ satisfying (\ref{zero}) and ({\ref{deltabd}), we get
that, for each $n$, on the set where $\|\hat f_n(\cdot;h_{1,n})-f(\cdot)\|_\infty^2\le c^2/(2B)$ (so, $\|\delta\|_\infty\le 1/2$),
\begin{equation}\label{delta2bd}
|\delta_2(t,x)|\le \frac{d^2\|K''\|_\infty}{2}(2T^{1/2}/c)^2\delta^2(x)I(\|t-x\|\le 2T^{1/2}c^{-1}h_{2,n}).
\end{equation}
in particular,
\begin{equation}\label{delta2bd'}
\sup_{t,x\in\mathbb R^d}|\delta_2(t,x)|=O_{\rm a.s.}\left(\|\hat f_n(\cdot;h_{1,n})-f(\cdot)\|_\infty^2\right)\ \ {\rm uniformly\ in}\ \ f\in{\cal P}_{C,2}.
\end{equation}
Set 
\begin{equation}\label{L1Lfunctions}
L_1(t)=\sum_{i=1}^dt_iK'_i(t)\ \ {\rm and}\ \  L(t)=dK(t)+L_1(t),\ \ t\in{\mathbb R}^d,
\end{equation}
and notice that by symmetry, integration by parts gives that $L$ is a second order kernel, as in dimension 1 ($K'_j$ denotes the partial derivative of $K$ in he direction of the $i$-th coordinate, and $t_i$ dentoes the $i$-the coordinate of $t\in\mathbb R^d$).
The decompositions (\ref{alphahat}), (\ref{delta1}) and (\ref{Kexp})  then give:
\begin{eqnarray}
{nh_{2,n}^d}\hat f(t;h_{1,n},h_{2,n})\!\!\!\!&=&\!\!\!\!{nh_{2,n}^d}\bar f(t;h_{2,n})\notag\\
&&\!\!+\sum_{i=1}^{n}L\left(\frac{t-X_i}{h_{2,n}}\alpha(f(X_i))\right)
\alpha^d(f(X_i))\delta (X_i)\label{one}\\
&&\!\!+\sum_{i=1}^{n}\Bigg[K\left(\frac{t-X_i}{h_{2,n}}\alpha(f(X_i))\right)\delta_1(X_i)+\alpha^d(f(X_i))\delta_2(t,X_i)\notag\\
&&~~~~~~~~~~~~~~~~~~~~~+dL_1\left(\frac{t-X_i}{h_{2,n}}\alpha(f(X_i))\right)\alpha^d(f(X_i))\delta^2(X_i)\Bigg]\label{two}\\
&&\!\!+\sum_{i=1}^{n}\left[L_1\left(\frac{t-X_i}{h_{2,n}}\alpha(f(X_i))\right)\delta(X_i)\delta_1(X_i)+d\alpha^d(f(X_i))\delta(X_i)\delta_2(t,X_i)\right]\notag\\
&&\!\!\label{three}\\
&&\!\!+\sum_{i=1}^n \delta_2(t,X_i)\delta_1(X_i)\label{four}
\end{eqnarray}
The sums (\ref{two})-(\ref{four}) are of lower and decreasing order, and will be dealt with first. Here is an elementary observation that will be useful: if for all $n\ge r$  we have that $U_n\le V_n$ whenever $W_n\le a$, then for all $\alpha$ and for
 $k\ge r$,
\begin{equation}\label{elem}
\Pr\left\{\sup_{n\ge k}U_n\ge\alpha\right\}\le \Pr\left\{\sup_{n\ge k}V_n\ge\alpha\right\}+\Pr\left\{\sup_{n\ge k}W_n\ge a\right\}.
\end{equation}
We will use this in conjunction with the fact that there is $a<\infty$ such that 
\begin{equation}\label{classic3}
\sup_{f\in{\cal P}_{C,2}}{\Pr}_f\left\{\sup_{n\ge k}\sqrt{\frac{nh_{1,n}^d}{\log n}}\|\hat f(t;h_{1,n})-f\|_\infty\ge a\right\}\to 0\ \ {\rm as}\ k\to\infty,
\end{equation}
which follows from (\ref{classic1}) and (\ref{classic2}) since for our value of $h_{1,n}$, we have $h_{1,n}^4=(\log n)/(nh_{1,n}^d)$. Let us consider the first term from (\ref{two}): by the previous observation (\ref{elem}),
\begin{eqnarray}\label{res}
&&\!\!\!\!\Pr\left(\sup_{n\ge k}\frac{1}{nh_{2,n}^dh_{1,n}^4}\sum_{i=1}^{n}\left|K\left(\frac{t-X_i}{h_{2,n}}\alpha(f(X_i))\right)\delta_1(X_i)\right|>\alpha\right)\\&&\le\Pr\left(\sup_{n\ge k}\frac{1}{nh_{2,n}^d}\left\|\sum_{i=1}^{n}\left|K\left(\frac{t-X_i}{h_{2,n}}\alpha(f(X_i))\right)\right|\right\|_\infty>\alpha/b\right)
+\Pr\left(\max_{n\ge k}\|h_{1,n}^{-4}\delta_1\|_\infty>b\right),\notag
\end{eqnarray}
 and the last term, for suitable $b<\infty$, converges to zero uniformly in $f\in{\cal P}_{C,2}$ as $k\to\infty$ by (\ref{classic3}) and (\ref{delta1bd}). Now, by change of variables, for any $r>0$,
 $$E\left|K^r\left(\frac{t-X_i}{h_{2,n}}\alpha(f(X_i))\right)\right|\le C(K,r)\|f\|_\infty h_{2,n}^d$$
 for some finite constant $C(K,r)$, so that, for a suitable constant $m$, the first term at the right hand side of  (\ref{res}) is bounded by
 $$\sum_{n=k}^\infty\Pr\left(\frac{1}{nh_{2,n}^d}\left\|\sum_{i=1}^{n}\left(\left|K\left(\frac{t-X_i}{h_{2,n}}\alpha(f(X_i))\right)\right|-E\left|K\left(\frac{t-X_1}{h_{2,n}}\alpha(f(X_1))\right)\right|\right)\right\|_\infty>\alpha/b-m\right).$$
Here we can use Talagrand's inequality (\ref{tal})  (if a class of functions is $VC$ type, so is the class of its absolute values, by direct computation of $L_2$ distances), which, by the second moment estimate above ($r=2$) and boundedness of $K$, and for suitable $\alpha$ (in particular making $\alpha/b-m>0$), shows that this series is dominated, uniformly if $f\in{\cal P}_{C}$, by 
$\sum_{n\ge k}e^{-\eta nh_{2,n}^d}$ for some $\eta>0$, which tends to zero. (Note that we are using Talagrand's inequality for $t$ in the upper limit of its domain (\ref{talc}), whereas typically one uses it for $t$ in its lower limit.) Thus, we have proved that, uniformly in $f\in{\cal P}_{C,2}$, 
$$\frac{1}{nh_{2,n}^d}\sup_{t\in\mathbb R^d}\sum_{i=1}^{n}\left|K\left(\frac{t-X_i}{h_{2,n}}\alpha(f(X_i))\right)\delta_1(X_i)\right|=O_{\rm a.s.}(h_{1,n}^4)=o_{\rm a.s.}\left((\log n)/n)^{4/(8+d)}\right).$$
Basically, what Talagrand and the simple observation (\ref{elem}) do is to show that the order of the first term in (\ref{two}) is at most the order of $\|\delta_1\|_\infty$ multiplied by the order of the sup of the expectations of the (absolute values of) the summands without $\delta_1(X_i)$. We can likewise follow this pattern of proof and get similar results for all the terms in (\ref{two})-(\ref{four}). One has to use that the classes of functions $\{L_1\left(\frac{t-\cdot}{h}\alpha(f(\cdot)):t\in\mathbb R^d, h>0\right\}$ and $\left\{I(\|t-\cdot\|\le 2T^{1/s}c^{-1}h):t\in\mathbb R^d, h>0\right\}$ are VC type by Lemma \ref{vc1} (the class of indicator functions is needed in order to handle the three terms in (\ref{two})-(\ref{four}) that contain $\delta_2$). We then get
\begin{equation}\label{easyterms}
\frac{1}{nh_{2,n}^d}\sup_{t\in\mathbb R^d}|(\ref{two})|=O_{\rm a.s.}(h_{1,n}^4), \ \frac{1}{nh_{2,n}^d}\sup_{t\in\mathbb R^d}|(\ref{three})|=O_{\rm a.s.}(h_{1,n}^6),\ \ \frac{1}{nh_{2,n}^d}\sup_{t\in\mathbb R^d}|(\ref{four})|=O_{\rm a.s.}(h_{1,n}^8)
\end{equation}
uniformly in $f\in{\cal P}_{C,2}$.

The estimation of (\ref{one}) is much more difficult. We decompose into several pieces using first the expansion  (\ref{deltatalor}) of $\delta$, and then the decomposition of $\hat f-f$ into variance $D$ and bias $b$, as follows:
\begin{eqnarray}
&&\!\!\!\!\!\!\!\!\!\!\frac{1}{n h_{2,n}^d}\sum_{i=1}^{n}L\left(\frac{t-X_i}{h_{2,n}}\alpha(f(X_i))\right)
\alpha^d(f(X_i))\delta (X_i)\nonumber\\
&&\!\!=\frac{1}{dn h_{2,n}^d}\sum_{i=1}^{n}L\left(\frac{t-X_i}{h_{2,n}}\alpha(f(X_i))\right)
(\alpha^d)'( f(X_i)) D(X_i;h_{1,n})\label{eps1}\\
&&+\frac{1}{dn h_{2,n}^d}\sum_{i=1}^{n}L\left(\frac{t-X_i}{h_{2,n}}\alpha(f(X_i))\right)
(\alpha^d)'(f(X_i))  b(X_i;h_{1,n})\label{eps2}\\
&&+\frac{1}{2n h_{2,n}^d}\sum_{i=1}^{n}L\left(\frac{t-X_i}{h_{2,n}}\alpha(f(X_i))\right)\alpha^{d-1}(fX_i))(\alpha^d)''(\eta(X_i))[\hat f(X_i;h_{1,n})-f(X_i)]^2\!.\label{e5}
\end{eqnarray}
Notice that the term (\ref{e5}) is very similar to the terms in (\ref{two}), and it has clearly the same order (recall (\ref{deltabd})), that is
\begin{equation}\label{easyterm2}
\sup_{t\in\mathbb R^d}|(\ref{e5})|=O_{\rm a.s.}(h_{1,n}^4)\ \ {\rm uniformly\ in}\ \ {\cal P}_{C,2}.
\end{equation}
We devote two subsections to the estimation of the remaining two terms. we anticipate that the main term is (\ref{eps1}).

\medskip
\noindent{\bf 3.1. Estimation of the bias term (\ref{eps2}).} Consider the classes of functions
\begin{equation}\label{qu}
{\cal Q}_n:=\left\{Q(x)=L\left(\frac{t-x}{h_{2,n}}\alpha(f(x))\right)
(\alpha^d)'( f(x))  b(x;h_{1,n}):t\in \mathbb{R}^d\right\}.
\end{equation}
Recall that $L(t)=\sum_{i=1}^dt_iK'_i+dK(t)$ and that    $K(t)=\Phi(\|t\|^2)$,  $\Phi$ twice boundedly differentiable and with bounded support. Hence, $L(t)=2\|t\|^2\Phi'(\|t\|^2)+d\Phi(\|t\|^2)$. Since the function 
$u\Phi'(u)+d\Phi(u)$ is of bounded variation and bounded, the kernel $L$ satisfies the hypotheses of the kernel $K$ in Lemma \ref{vc1} (with $s=2$). So, these classes conform,
for each $n$,  to Lemma \ref{vc1} with $\cal G$ the class consisting of the single function  $(\alpha^d)'(f(x))b(x,h_{1,n})$, which, by (\ref{classic2}), is uniformly bounded by $M(c,p,C)h_{1,n}^2$ if $f\in{\cal P}_{C,2}$, for some constant $M$ depending only on $c$, $p$ and $C$. We conclude by that lemma that they are $VC$ each with with envelope $M(c,p, C, K)h_{1,n}^2$ for some other constant depending on the stated objects, and all with the same characteristic constants $A$ and $v$. 
Since the continuity hypotheses make these classes measurable, this will allow us to apply Talagrand's inequality.
 If we set
 $$Q_i(t)=L\left(\frac{t-X_i}{h_{2,n}}\alpha(f(X_i))\right)
(\alpha^d)'( f(X_i))  b(X_i;h_{1,n})$$
it then follows, by the bound (\ref{classic2}) on $b$, by boundedness and bounded support of $L$, by boundedness of $p^\prime$  and $p\ge 1$,  that, for all $t$ and all $f\in{\cal P}_{C,2}$,
\begin{eqnarray*}
\sup_{t\in\mathbb R^d}EQ_i^2(t)&\le& \|b(\cdot;h_{1,n})\|_\infty^2\|(\alpha^d)'\|^2_\infty\|f\|_\infty h_{2,n}^d
\sup_{t\in\mathbb R^d}\int_{\mathbb R^d} L^2(u\alpha(f(t-uh_{2,n})))du\\
&\le & M(C,c,p,K)h_{1,n}^4h_{2,n}^d,
\end{eqnarray*}
and similarly,
$$\sup_{t\in \mathbb{R}^d}|Q_i(t)|\lessim \bar M(c,p, C,K) h_{1,n}^2.$$
We then have
\begin{equation*}
\sup_{t\in \mathbb{R}^d}\left|\frac{1}{nh_{2,n}^d}\sum_{i=1}^nQ_i(t)\right|
\le \sup_{t\in \mathbb{R}^d}\left|\frac{1}{nh_{2,n}^d}\sum_{i=1}^{n} [Q_i(t)-EQ_i(t)]\right|+ \sup_{t\in \mathbb{R}^d}\frac{1}{h_{2,n}^d}|EQ_1(t)|
\end{equation*}
and Talagrand's inequality (\ref{tal}), with $\sigma^2=M(C,c,p,K)h_{1,n}^4h_{2,n}^d$ and $F=2M(c,p, C,K) h_{1,n}^2$ gives that, for some $D$, $L>1$,
$$\sum_n\sup_{f\in{\cal P}_C}{\Pr}_f\left\{\sup_{t\in \mathbb{R}^d}\left|\sum_{i=1}^{n} [Q_i(t)-EQ_i(t)] \right|\ge D\sqrt{nh_{1,n}^4h_{2,n}^d\log n}\right\}\le
C_2\sum_n\exp\left(-L\log n\right)<\infty.$$
Since $\sqrt{nh_{1,n}^4h_{2,n}^d\log n}/(nh_{2,n}^d)<<((\log n)/n)^{4/(8+d)}$, the term  (\ref{eps2}) will be at most of the order of $((\log n)/n)^{4/(8+d)}$ only if the expectation term $\sup_{t\in \mathbb{R}^d}\frac{1}{h_{2,n}^d}|EQ_1(t)|$ is of this order or smaller (uniformly in $t\in\mathbb R^d$ and $f\in{\cal P}_{C,2}$). The obvious bound for $E|Q_1(t)|$, that one obtains just like  the bound above for $EQ_1^2(t)$, is of the order of $h_{1,n}^2h_{2,n}^d$, which then gives an order of $h_{1,n}^2$ for the term (\ref{eps2}). This is not good enough, although it would be if we undersmoothed the preliminary estimator a little by taking $h_{1,n}=((\log n)/n)^{2/(8+d)}$ instead of $h_{1,n}=((\log n)/n)^{1/(4+d)}$: this works, and in fact we made this choice in Gin\'e and Sang (2010) on a related problem and $d=1$, however, some extra work along the lines suggested by Hall and Marron (1988) will allow us to prove the right rate for (\ref{eps2}) with the optimal $h_{1,n}$, as follows. 
In the setting of the proof of Proposition \ref{bias0}, but in dimension $d$, the inverse function theorem yields 
the existence and differentiability of $V_t(u)$, the inverse function of $U_t(v)=v\alpha(f(t-v))$ in a neighborhood of zero independent of $t$ (this can be readily seen, or one can see it in McKay (1993b)). This, together with the facts that $L$ has bounded support, $\alpha$ is bounded away from zero and $h_{2,n}\to0$, justifies the change of variables $hz=(t-s)\alpha(f(t-(t-s)))$ in the expression of $EQ_1(t)$, to get (omitting the subindex $n$ in the bandwidth),
\begin{eqnarray*}
\frac{1}{h_2^d}EQ_1(t)&=&\frac{1}{h_2^d}\int_{\mathbb R^d} L\left(\frac{t-s}{h_2}\alpha(f(s))\right)(\alpha^d)'(f(s))f(s)\int_{\mathbb R^d}\frac{1}{h_1^d}K\left(\frac{s-u}{h_1}\right)(f(u)-f(s))duds\\
&=&-\int_{\mathbb R^d}\bigg((\alpha^d)'(f(t-V_t(h_2z)))f(t-V_t(h_2z))\left(\frac{\partial V_t}{\partial v}\right)_{v=h_2z}\\
&&~~~~~~~~~\times\int_{\mathbb R^d} K(y)\left(f(t-V_t(h_2z)-yh_1)-f(t-V_t(h_2z))\right)dy\Bigg)L(z)dz\\
&:=&\int_{\mathbb R^d} F(h_2z)G(h_2z)L(z)dz.
\end{eqnarray*}
Then, using that $\int L(z)dz=\int z_iL(z)dz=0$ and expanding, we obtain
$$\frac{1}{h_2^d}EQ_1(t)=-\frac{h_2^2}{2}\int_{\mathbb R^d}\left(\sum_{i,j=1}^d(F''_{i,j}G+F'_iG'_j+F'_jG'_i+FG''_{i,j})(\theta(h_2z))z_iz_j\right)L(z)dz.$$
Now, $F$ and its partial derivatives are bounded, $L$ is bounded and has bounded support,  and, if $f\in {\cal P}_{C,4}$, then, expanding $g(t-V_t(h_2z)-yh_1)-g(t-V_t(h_2z))$, for $g=f, f'_i, f''_{i,j}$, and using the symmetry of $K$, we get that 
$\|G\|_\infty$, $\|G'_i\|_\infty$, $\|G''_{i,j}\|_\infty$ are all $O(h_1^2)$ uniformly in $f\in{\cal P}_{C,4}$. We conclude
\begin{equation}\label{biasbias}
\sup_{t\in\mathbb R^d}\frac{1}{h_{2,n}^d}|EQ_1(t)|=O(h_{1,n}^2h_{2,n}^{2}) \ \ {\rm uniformly\ in} \ f\in{\cal P}_{C,4},
\end{equation}
and this in turn gives, together with the above application of Talagrand's inequality,
\begin{equation}\label{bb}
\sup_{t\in \mathbb{R}^d}\left|\frac{1}{nh_{2,n}^2}\sum_{i=1}^nL\left(\frac{t-X_i}{h_{2,n}}\alpha(f(X_i))\right)
(\alpha^d)'( f(X_i))  b(X_i;h_{1,n})\right|=o_{\rm a.s.}(n^{-4/(8+d)})
\end{equation}
uniformly in $f\in{\cal P}_{C,4}$.
\medskip

\noindent{\bf 3.2. Estimation of the variance term (\ref{eps1}).} This term requires $U$-processes. Given a function $H$ of two variables, and two i.i.d. random variables $X$ and $Y$ such that $H(X,Y)$ is integrable, we recall the $U$-statistic notation
$$U_n(H)=\frac{1}{n(n-1)}\sum_{1\le i\ne j\le n}H(X_i,X_j),$$
where the variables $X_i$ are i.i.d. copies of $X$, as well as the second order Hoeffding projection of $H(X,Y)$,
$$\pi_2(H)(X,Y)=H(X,Y)-E_XH(X,Y)-E_YH(X,Y)+EH,$$
If we set
\begin{equation}\label{H}
H_t(X,Y):=L\left(\frac{t-X}{h_{2,n}^d}\alpha(f(X))\right)
(\alpha^d)'(f(X))K\left(\frac{X-Y}{h_{1,n}}\right),
\end{equation}
then (\ref{eps1}) decomposes into a diagonal term and a $U$-statistic term, as follows:
\begin{eqnarray}\label{hoef1}
&&\frac{n^2h_{1,n}^dh_{2,n}^d}{n(n-1)}\frac{1}{nh_{2,n}^d}\sum_{i=1}^{n}L\left(\frac{t-X_i}{h_{2,n}}\alpha(f(X_i))\right)
(\alpha^d)'( f(X_i)) D(X_i;h_{1,n})\notag\\
&&=\frac{1}{n(n-1)}\sum_{i=1}^n (H_t(X_i,X_i)-E_YH_t(X_i,Y))+U_n(H_t-E_YH_t(\cdot,Y))\notag\\
&&=\frac{1}{n(n-1)}\sum_{i=1}^n (H_t(X_i,X_i)-E_YH_t(X_i,Y))+U_n\left(\pi_2(H_t(\cdot,\cdot))\right)\notag\\
&&~~~~~~~~~~~~~~~~~~~~~~~~~~~~~+\frac{1}{n}\sum_{i=1}^n(E_XH_t(X, X_i)-EH_t).
\end{eqnarray}
These are two empirical process terms and a canonical $U$-statistic term. The last term will turn out to be the only significant one.

For the first empirical process in (\ref{hoef1}), set
$$\bar Q_i(t)=H_t(X_i,X_i)-E_YH_t(X_i,Y)$$
and observe that, very much as in the simple bounds for moments of  $Q_i$ in the previous subsection,
$$\sup_{f\in{\cal P}_C}
\sup_{t\in \mathbb{R}^d}E|\bar Q_1(t)|\le L_1 h_{2,n}^d,\ \ \sup_{f\in{\cal P}_C}\sup_{t\in \mathbb{R}^d}E\bar Q_1^2(t)\le L_2 h_{2,n}^d, \ \ \sup_{f\in{\cal P}_C}\sup_{t\in \mathbb{R}^d}|\bar Q_1(t)|\le L_3$$
for some finite constants $L_i=L_i(C,c, p, K)$.
So,
\begin{equation}\label{interm}
\sup_{t\in \mathbb{R}^d}\frac{1}{n^2h_{1,n}^dh_{2,n}^d}\left|\sum_{i=1}^n\bar Q_i(t)\right|\le\frac{1}{n^2h_{1,n}^dh_{2,n}^d}\sup_{t\in \mathbb{R}^d}\left|\sum_{i=1}^n(\bar Q_i(t)-E\bar Q_1(t))\right|+\frac{L_1}{nh_{1,n}^d}.
\end{equation}
The supremum part corresponds to the empirical process over the class of functions of $x$
\begin{equation*}
\bar{\cal Q}_n=\left\{L\left(\frac{t-x}{h_{2,n}}\alpha(f(x))\right)
(\alpha^d)'(f(x))\left(K(0)-EK\left(\frac{x-X}{h_{1,n}}\right)\right)
: t\in \mathbb{R}^d\right\},
\end {equation*}
which, by Lemma \ref{vc1} is of VC type with respect to a constant envelope and admits characteristic constants $A$ and $v$ independent of $n$ and $f$, just as in the previous subsection for the classes ${\cal Q}_n$ defined by (\ref{qu}). Then, as in this previous instance, Talagrand's inequality (\ref{tal}) gives that there exist $D_1,D_2>1$ such that
$$\sum_n\sup_f{\Pr}_f\left\{\sup_{t\in \mathbb{R}^d}\left|\sum_{i=1}^n(\bar Q_i(t)-E\bar Q_1(t))\right|>D_1\sqrt{nh_{2,n}^d\log n}\right\}\le
C_2\sum_n\exp\left(-D_2\log n\right)<\infty,$$
which, since $\sqrt{nh_{2,n}^d\log n}/(n^2h_{1,n}^dh_{2,n}^d)<<((\log n)/n)^{4/(8+d)}$ and since also \hfil\break$nh_{1,n}^d>> (n/\log n)^{4/(8+d)}$, together with (\ref{interm})
yields
\begin{equation}\label{diag}
\sup_{t\in \mathbb{R}^d}\frac{1}{n^2h_{1,n}^dh_{2,n}^d}\left|\sum_{i=1}^n (H_t(X_i,X_i)-E_YH_t(X_i,Y))\right|=o_{a.s.}((\log n)/n)^{4/(8+d)}) \ {\rm uniformly\ in} \ f\in{\cal P}_{C}.
\end{equation}

The canonical $U$-statistic term in (\ref{hoef1}) is best handled by means of an exponential inequality of Major (2006). We will state his inequality for bounded VC type classes of functions of two variables only. Let $\cal F$ be such a class of functions and let $\|F\|_\infty^2\ge \sigma^2\ge \|{\rm Var}(f(X_1,X_2))\|_{\cal F}$.  Then, if $\cal F$ is a uniformly bounded, countable class of VC type, there exist  $0<C_i<\infty$, $1\le i\le 3$, depending on $v$ and $A$ such that, for all $t$ satisfying
$$C_1n\sigma\log\frac{2\|F\|_\infty}{\sigma}\le t\le \frac{n^2\sigma^3}{\|F\|_\infty^2}$$
we have
\begin{equation}\label{major}
\Pr\left\{\left\|\sum_{1\le i\ne j\le n}\pi_2^Pf(X_i,X_j)\right\|_{\cal F}>t\right\}\le C_2\exp\left(-C_3\frac{t}{n\sigma}\right).
\end{equation}
Major states the theorem for $\{\pi_2^Pf\}$ of VC type, but it is easy to see that if $\cal F$ is VC type for $F$ then $\{\pi_2^Pf:f\in {\cal F}\}$ is VC type for the envelope $4F$. Our classes $\cal F$ will be the classes $\{H_t:t\in\mathbb R^d\}$. Note that they depend on $n$ via $h_{i,n}$, $i=1,2$, but we do not display this dependence because they are VC type for a fixed constant envelope, admitting characteristic constants $A$ and $v$ independent of $n$: this follows from Lemma \ref{vc1} with $L$ instead of $K$, and with $\cal G$ consisting of the single bounded function $(\alpha^d)'(f(X))K\left(\frac{X-Y}{h_{1,n}}\right)$ (see Section 3.1, proof that the classes defined in (\ref{qu}) are $VC$). Since, as is easy to check, for $f\in{\cal P}_C$,
$$EH_t^2(X,Y)\le M(c,p,C,K) h_{1,n}^dh_{2,n}^d,$$
we can take   $\sigma^2=  M(c,p,C,K) h_{1,n}^dh_{2,n}^d$ and 
conclude that there exist $D_1,D_2>1$ such that
$$\sum_n\sup_{f:\|f\|_\infty\le C}{\Pr}_f\left\{\sup_{t\in \mathbb{R}^d}|U_n(\pi_2(H_t))|>D_1\sqrt{h_{1,n}^dh_{2,n}^d}(\log n)/n\right\}\le
C_2\sum_n\exp\left(-D_2\log n\right)<\infty.$$
Since $(\log n)/[n\sqrt{h_{1,n}^dh_{2,n}^d}]<<((\log n)/n)^{4/(8+d)}$ we obtain
\begin{equation}\label{ust}
\sup_{t\in \mathbb{R}^d}\frac{1}{h_{1,n}^dh_{2,n}^d}|U_n(\pi_2(H_t))|=o_{\rm a.s.}(n^{-4/(8+d)})\ \ {\rm uniformly\ in}\ \ f\in{\cal P}_{C}.
\end{equation}

Having dealt with the first two terms in the last two lines of (\ref{hoef1}), we will now handle the third and last, namely,
\begin{equation}\label{TT}
T(t; h_{1,n}, h_{2,n})=\frac{1}{nh_{1,n}^dh_{2,n}^d}\sum_{i=1}^n(E_XH_t(X, X_i)-EH_t)
\end{equation}
or, setting, for ease of notation,
\begin{equation}\label{ge}
g(t,x)=E_XH_t(X, x)=E_{X}\left[
 L\left(\frac{t-X}{h_{2,n}}\alpha(f(X))\right)K\left(\frac{X-x}{h_{1,n}}\right)
(\alpha^d)'(f(X))\right],
\end{equation}
$$T(t;h_{1,n},h_{2,n})=\frac{1}{nh_{1,n}^dh_{2,n}^d}\sum_{i=1}^{n}(g(t,X_i)-Eg(t,X)).$$

Let $\cal G$ be the class of functions $\{g(t,\cdot):t\in\mathbb R^d\}$. We check that this class is of VC type and apply Talagrand's inequality once more. We have, for any $s, t\in\mathbb R^d$, and Borel probability measure $Q$,
\begin{eqnarray*}&&\!\!\!\!E_Q(g(t,x)-g(s,x))^2\\
&&\!\!\!\!\le \! \int \! E_X\!\left((\alpha^d)'(f(X))K\left(\frac{X-x}{h_{1,n}}\right)\right)^2 E_X\!\left(L\Big(\frac{t-X}{h_{2,n}}\alpha(f(X))\Big)
-L\Big(\frac{s-X}{h_{2,n}}\alpha(f(X))\Big)
\right)^2dQ(x)\\
&&\!\!\!\!\le \|(\alpha^d)'\|_\infty^2\|f\|_\infty  h_{1,n}^d\|K\|_2^2\int \left(L\Big(\frac{t-y}{h_{2,n}}\alpha(f(y))\Big)
-L\Big(\frac{s-y}{h_{2,n}}\alpha(f(y))\Big)
\right)^2f(y)dy\\
&&\!\!\!\!=\|(\alpha^d)'\|_\infty^2\|f\|_\infty h_{1,n}^d\|K\|_2^2E_f(\ell_t-\ell_s)^2
\end{eqnarray*}
where $\ell_{s}$ and $\ell_{t}$ are functions from the class
${\cal L}:=\left\{L\left(\frac{t- \cdot}{h}\alpha(f(\cdot))\right):t\in\mathbb{R}^d, h>0 \right\}$
which is VC for a constant envelope by Lemma \ref{vc1} (as $L$ satisfies the hypotheses of $K$ in that lemma -see Subsection 3.1-). This lemma then proves that for all $Q$ and for all $f\in{\cal P}_C$,
\begin{equation}\label{entg}
N({\cal G}, L_2(Q),\varepsilon)\le\left(\frac{R(c,p,d,K,C)h_{1,n}^{d/2}}{\varepsilon}\right)^{8d+20},\ \ 0<\varepsilon<R(c,p,d,K,C)h_{1,n}^{d/2}
\end{equation}
for some $R=R(c,p,d,K,C)$ depending only on the estipulated parameters, in particular, $\cal G$ is VC for the constant envelope $Rh_{1,n}^{d/2}$ (that depends on $n$), with characteristic constants $A=1$ and $v=8d+20$ independent of $n$ and $f\in{\cal P}_C$.

In order to apply Talagarand's (\ref{tal}) inequality, we need to estimate $Eg^2(t,X)$. With the change of variables
$x=t-h_{1,n}w-h_{1,n}z$, $y=t-h_{2,n}z$, $u=t-h_{1,n}w-h_{2,n}z-h_{1,n}s$, of determinant $h_{1,n}^{2d}h_{2,n}^d$, we obtain
\begin{eqnarray}\label{sig}
 &&Eg^2(t,X_1)\notag\\
&&=\int_{\mathbb R^d}\Big\{\int_{\mathbb R^d} f(x)K\Big(\frac{x-u}{h_{1,n}}\Big)
 L\left(\frac{t-x}{h_{2,n}}\alpha(f(x))\right)
(\alpha^d)'(f(x))dx\nonumber\\
&&~~~~\times \int_{\mathbb R^d} f(y)K\Big(\frac{y-u}{h_{1,n}}\Big)
 L\left(\frac{t-y}{h_{2,n}}\alpha(f(y))\right)
(\alpha^d)'(f(y))
dy\Big\}f(u)du\nonumber\\
&&\le ||f||_\infty^3h_{1,n}^{2d}h_{2,n}^d\|(\alpha^d)'\|_\infty^2\int_{\mathbb R^d}\int_{\mathbb R^d}\int_{\mathbb R^d}  L\Big(\big(\frac{h_{1,n}}{h_{2,n}}w+z\big) \alpha(f(t-h_{1,n}w-h_{2,n}z))\Big)\nonumber\\
&&~~~~~~~~~~\times L\Big(z \alpha(f(t-h_{2,n}z))\Big)K(s)K(s+w)dsdwdz\nonumber\\
&&\le B_K||f||_\infty^3h_{1,n}^{2d}h_{2,n}^d\|(\alpha^d)'\|_\infty^2 c^{-1},
\end{eqnarray}
where the last inequality follows from the bounded support of $L$ and $K$ and $\alpha(t)\ge c$. Then, since  the envelope $F$ of the VC class $\cal G$ can be taken to be $B_Kh_{1,n}^{d/2}$ and $\sigma^2$ to be $B_KC^3h_{1,n}^{2d}h_{2,n}^dc^{-1}$ (by (\ref{entg}) and (\ref{sig})), for constants $B_K$ that depend only on $K$, we get by (\ref{talc}) and (\ref{tal}) that there exist constants $D_1,D_2>1$ depending only on $K$, $C$, $d$, $p$ and $c$ such that 
$$\sup_{f\in{\cal P}_{C}}{\Pr}_f\left\{\left\|\sum_{i=1}^n(g(\cdot,X_i)-Eg(\cdot,X))\right\|_{\infty}\ge D_1\sqrt{nh_{1,n}^{2d}h_{2,n}^d\log n}\right\}\le C_2\exp\left(-D_2\log n\right),$$
and note that 
$$\sqrt{nh_{1,n}^{2d}h_{2,n}^d\log n}/(nh_{1,n}^dh_{2,n}^d)=((\log n)/n)^{4/(8+d)}$$
we get that
\begin{equation}\label{T}
\sup_{t\in\mathbb R^d}|T(t;h_{1,n}h_{2,n})|=O_{\rm a.s.}\left([(\log n)/n]^{4/(8+d)}\right)\ \ {\rm uniformly\ in}\  f\in{\cal P}_C.
\end{equation}

Combining the estimates (\ref{diag}), (\ref{ust}) and (\ref{T}) into (\ref{hoef1}) yields
\begin{equation}\label{varfinal}
\sup_{t\in\mathbb R^d}\frac{1}{nh_{2,n}^d}\left|\sum_{i=1}^{n}L\left(\frac{t-X_i}{h_{2,n}}\alpha(f(X_i))\right)
(\alpha^d)'( f(X_i)) D(X_i;h_{1,n})\right|=O_{\rm a.s.}\left([(\log n)/n]^{4/(8+d)}\right)
\end{equation}
uniformly  in $f\in{\cal P}_C$.

Plugging in the estimates (\ref{easyterms}), (\ref{easyterm2}), (\ref{bb}) and (\ref{varfinal}) into the decompositions (\ref{one})-(\ref{four}) and (\ref{eps1})-(\ref{e5}) of $\hat f(t;h_{1,n},h_{2,n})-\bar f(t;h_{2,n})$, yields:

\begin{proposition}\label{real-ideal}
Under Assumptions \ref{ass3}, for any $C<\infty$ the difference between the actual and the ideal estimators of a density $f$ satisfies
$$\sup_{t\in\mathbb R^d}|\hat f(t;h_{1,n},h_{2,n})-\bar f(t;h_{2,n})|=O_{\rm a.s.}\left(\left(\frac{log n}{n}\right)^{4/(8+d)}\right)\ {\rm uniformly\ in}\ f\in{\cal P}_{C,4}.$$
Moreover,
$$\sup_{t\in\mathbb R^d}|\hat f(t;h_{1,n},h_{2,n})-\bar f(t;h_{2,n})-T(t,h_{1,n}h_{2,n})|=o_{\rm a.s.}\left(\left(\frac{log n}{n}\right)^{4/(8+d)}\right)\ {\rm uniformly\ in}\ f\in{\cal P}_{C,4}.$$
\end{proposition}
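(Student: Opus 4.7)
The proposition is essentially a bookkeeping statement that consolidates all the estimates already established in Section 3. The plan is to combine the decomposition of $\hat f - \bar f$ given in equations (\ref{one})--(\ref{four}) with the finer decomposition of (\ref{one}) into (\ref{eps1})--(\ref{e5}), and then just sum the rates.

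First, I would recall that substituting the expansions (\ref{alphahat}) for $\alpha(\hat f)$, (\ref{delta1}) for $\alpha^d(\hat f)$, and (\ref{Kexp}) for $K(\cdot\alpha(\hat f))$ into the definition (\ref{realest}) of $\hat f(t;h_{1,n},h_{2,n})$ and subtracting $\bar f(t;h_{2,n})$ produces, after grouping powers of $\delta$ and $\delta_1$, the identity
$nh_{2,n}^d[\hat f(t;h_{1,n},h_{2,n})-\bar f(t;h_{2,n})]=(\ref{one})+(\ref{two})+(\ref{three})+(\ref{four})$. The leading piece (\ref{one}) is then split further by writing $\delta(X_i)$ via its Taylor expansion (\ref{deltatalor}) and decomposing $\hat f(X_i;h_{1,n})-f(X_i)=D(X_i;h_{1,n})+b(X_i;h_{1,n})$, which produces the three sub-terms (\ref{eps1}), (\ref{eps2}), (\ref{e5}).

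Next I would simply collect the bounds proved above. The bounds (\ref{easyterms}) and (\ref{easyterm2}) give that, after dividing by $nh_{2,n}^d$, the contributions of (\ref{two}), (\ref{three}), (\ref{four}) and (\ref{e5}) are respectively $O_{\rm a.s.}(h_{1,n}^4)$, $O_{\rm a.s.}(h_{1,n}^6)$, $O_{\rm a.s.}(h_{1,n}^8)$ and $O_{\rm a.s.}(h_{1,n}^4)$ uniformly in $f\in\mathcal{P}_{C,2}$. Since $h_{1,n}=((\log n)/n)^{1/(4+d)}$, we have $h_{1,n}^4=((\log n)/n)^{4/(4+d)}$, which is of strictly smaller order than $((\log n)/n)^{4/(8+d)}$, so all these contributions are in fact $o_{\rm a.s.}(((\log n)/n)^{4/(8+d)})$. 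The bias term (\ref{eps2}) is $o_{\rm a.s.}(n^{-4/(8+d)})$ by (\ref{bb}) (uniformly in $\mathcal{P}_{C,4}$, which is where the extra smoothness hypothesis is used), and the two error parts of (\ref{eps1}) obtained from the Hoeffding-type decomposition (\ref{hoef1}) are $o_{\rm a.s.}(((\log n)/n)^{4/(8+d)})$ by (\ref{diag}) and (\ref{ust}). The remaining empirical process $T(t;h_{1,n},h_{2,n})$ from (\ref{TT}) is $O_{\rm a.s.}(((\log n)/n)^{4/(8+d)})$ by (\ref{T}).

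Summing these bounds gives the first claim, and since every term except $T(t;h_{1,n},h_{2,n})$ is actually $o_{\rm a.s.}$ of the target rate, the second claim follows immediately. There is no real obstacle here since all the analytic work has been done; the only mild subtlety is that each estimate was proved to be uniform in an appropriate class $\mathcal{P}_{C}$, $\mathcal{P}_{C,2}$, or $\mathcal{P}_{C,4}$, so one must observe that the intersection of these hypotheses is $\mathcal{P}_{C,4}$, which is the class over which the proposition is stated.
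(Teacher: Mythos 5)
Your proof is correct and follows essentially the same route as the paper: the paper obtains Proposition \ref{real-ideal} precisely by plugging the bounds (\ref{easyterms}), (\ref{easyterm2}), (\ref{bb}) and the variance estimates (\ref{diag}), (\ref{ust}), (\ref{T}) (collected in (\ref{varfinal})) into the decompositions (\ref{one})--(\ref{four}) and (\ref{eps1})--(\ref{e5}), noting as you do that every contribution except $T(t;h_{1,n},h_{2,n})$ is $o_{\rm a.s.}(((\log n)/n)^{4/(8+d)})$. Your closing remark that the various uniformity classes ${\cal P}_C$, ${\cal P}_{C,2}$, ${\cal P}_{C,4}$ all contain ${\cal P}_{C,4}$ is the right way to justify the stated uniformity.
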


\medskip
\noindent{\bf 3.3. End of the proof of Theorem \ref{main0}.} Proposition \ref{real-ideal}  together with the results in Section 2 for the bias (Corollary \ref{4}) and the variance (Proposition \ref {varid}) of the ideal estimator complete the proof of the asymptotic estimate (\ref{main1}) in Theorem \ref{main0}. To prove (\ref{main2}), we note that, by (\ref{classic1}) and (\ref{classic2}), $\|\hat f(t;h_{1,n})-f\|_\infty=O_{a.s.}(((\log n)/n)^{2/(4+d)})$ uniformly in $f\in{\cal P}_{C,2}$, that is, there exists $\lambda<\infty$ such that
\begin{equation}\label{prelim}
\lim_{k\to\infty}\sup_{f\in{\cal P}_{C,2}}\Pr_f\left\{\sup_{n\ge k}\left(\frac{n}{\log n}\right)^{2/(4+d)}\|\hat f(t_i;h_{1,n})-f\|_\infty>\lambda\right\}=0.
\end{equation}
Since $\|\hat f(\omega)-f\|_\infty\le \lambda((\log n)/n)^{2/(4+d)}$ implies $\hat {\cal D}_r^n(\omega)\subset {\cal D}_r$ as soon as $r> \lambda((\log n)/n)^{2/(4+d)}$, (\ref{main2}) follows immediately from (\ref{main1}) and (\ref{prelim}). This concludes the proof of Theorem \ref{main0}.

\section{Estimation of densities in $C^6(\mathbb R)$: Proof of Theorem \ref{h6main0}}

In this section we make the following assumptions on the kernel $K$, a new kernel $G$, the clipping function $p$, the densities $f$ and the bandsequences:

\medskip
\begin{assumptions}\label{h6ass3}
 We assume that the kernel $K$ is non-negative, bounded and is symmetric about zero, has support contained in $[-T,T]$, $T<\infty$, integrates to 1 and has  a uniformly bounded second derivative.  We also assume that the densities $f$ are bounded and have at least six bounded derivatives,
\begin{equation}
f\in{\cal P}_{C,6}:=\{f\ {\rm is\ a\ density}:\|f^{(k)}\|_\infty\le C, 0\le k\le 6\}
\end{equation}
for some $C<\infty$.  We assume that $G$ is a fourth order kernel $G$ supported by $[-T_G,T_G]$ for some $T_G<\infty$, integrates to $1$, is symmetric about zero and has two  bounded, continuous derivatives. The nondecreasing clipping function $p:\mathbb {R}\rightarrow \mathbb {R}$ is assumed to have a bounded and continuous derivative, $p(s)\ge 1$ for all $s$ and $p(s)=s$ for all $s\ge t_0\ge 1$, where $c$ and $t_0$ are fixed constants. We set $h_{1,n}=((\log n)/n)^{1/5}$, $h_{2,n}=h_{4,n}=((\log n)/n)^{1/13}$, $h_{3,n}=((\log n)/n)^{1/11}$, $n\in\mathbb N$.
 \end{assumptions}

The ideal estimator we study in this section is as defined in (\ref{h6idealest}), and the corresponding true estimator as in (\ref{h6realest}).

By (\ref{idealrateJ}), in order to prove Theorem \ref{h6main0}, it suffices to show that the uniform convergence rate of the discrepancy between the true estimator (\ref{h6realest}) and the ideal estimator (\ref{h6idealest}) is of the same order.

We will use $\delta(x), D(x;h_{1,n})$ and $b(x;h_{1,n})$ as defined in the last section, and note that  (\ref{delta}), (\ref{deltabd}), (\ref{classic1}), (\ref{classic2}) and (\ref{deltatalor}) still hold (with $d=1$).
Following the proof of Theorem 2.3 in Gin\'e and Guillou (2002) or Proposition 1 in Gin\'e and Sang (2010), under the conditions on $G$ and $f$ in Assumption \ref{h6ass3}, it is easy to show that
\[
\sup_{x\in \mathbb{R}}|f_{G_1}(x;h_{3,n})-Ef_{G_1}(x;h_{3,n})|=O_{\rm a.s.}\left(\sqrt{\frac{\log h_{3,n}^{-1}}{n h_{3,n}^3}}\right)
\]
and
\[
\sup_{x\in \mathbb{R}}|f_{G_2}(x;h_{4,n})-Ef_{G_2}(x;h_{4,n})|=O_{\rm a.s.}\left(\sqrt{\frac{\log h_{4,n}^{-1}}{n h_{4,n}^5}}\right)
\]
uniformly in $f\in{\cal P}_{C,6}$, and classical bias computations with $m$-th order kernels give
\[
\sup_{x\in \mathbb{R}}|Ef_{G_1}(x;h_{3,n})-f^\prime(x)|\le \frac{1}{24}\left(\int G(u)u^4du\right)\|f^{(5)}\|_\infty h_{3,n}^4,
\]
and
\[
\sup_{x\in \mathbb{R}}|Ef_{G_2}(x;h_{4,n})-f''(x)|\le \frac{1}{24}\left(\int G(u)u^4du\right)\|f^{(6)}\|_\infty h_{4,n}^4
\]
(recall the definitions of $f_{G_1}$ and $f_{G_2}$ from (\ref{first}) and (\ref{second})). Define 
$$\xi(x)=f_{G_1}(x;h_{3,n})-f'(x),\;\;  \zeta(x)=f_{G_2}(x;h_{4,n})-f''(x).$$ 
These estimates give
\begin{equation}\label{h6xi}
\sup_{x\in \mathbb{R}}|\xi(x)|=O_{\rm a.s.}(n^{-4/11}(\log n)^{4/11})\ \ {\rm uniformly\ in}\ f\ {\rm such\ that}\ f\in{\cal P}_{C,6}
\end{equation}
and
\begin{equation}\label{h6zeta}
\sup_{x\in \mathbb{R}}|\zeta(x)|=O_{\rm a.s.}(n^{-4/13}(\log n)^{4/13})\ \ {\rm uniformly\ in}\ f\ {\rm such\ that}\ f\in{\cal P}_{C,6},
\end{equation}
and, since $h_{1,n}$ is as in Section 3, (\ref{zero}) gives that 
\begin{equation}\label{h6zero}
\sup_{x\in \mathbb{R}}|\delta(x)|=O_{\rm a.s.}(n^{-2/5}(\log n)^{2/5})\ \ {\rm uniformly\ in}\ f\in{\cal P}_{C,2}.
\end{equation}

For $\beta$ and $\hat\beta(x;h_{1,n},h_{3,n},h_{4,n})$ as defined below (\ref{h6idealest}), define $\rho(x)$ so that
$$\frac{1}{1+h_{2,n}^2\hat\beta(x;h_{1,n},h_{3,n},h_{4,n})}=(1+\rho(x))\frac{1}{1+h_{2,n}^2\beta(x)}.$$
Then, with some elementary but tedious work,
\begin{eqnarray}
\rho(x)\!\!\!&=&\!\!\!\frac{h_{2,n}^2(\beta-\hat\beta)}{1+h_{2,n}^2\hat\beta}\nonumber\\
\!\!\!&=&\!\!\!\frac{h_{2,n}^2\{[(1+\delta(x))^6-1]S(x)-[f''(x)+\zeta(x)](D+b)-f(x)\zeta(x)+4f'(x)\xi(x)+2\xi(x)^2]\}
}{24\tau_2c^6p^3(c^{-2}\hat f(x;h_{1,n}))/\tau_4+h_{2,n}^2[f_{G_2}(x;h_{4,n})\hat f(x; h_{1,n})-2(f_{G_1}(x;h_{3,n}))^2]}\nonumber\\
\label{h6rhodef}
\end{eqnarray}
where $S(x):=f^{\prime\prime}(x)f(x)-2(f^\prime(x))^2$, $D=D(x;h_{1,n})$, $b=b(x;h_{1,n})$ and $\tau_k$ is the absolute $k$-th moment of $K$.
Now, since the denominator is bounded away from zero ($p$ is, and the second summand in the denominator tends to zero uniformly in $f$),  the bounds (\ref{h6xi})-(\ref{h6zero}) and (\ref{classic1}), (\ref{classic2}) give
\begin{equation}\label{h6rho}
\sup_{x\in \mathbb{R}}|\rho(x)|=O_{\rm a.s.}(n^{-6/13}(\log n)^{{6}/{13}})\ \ {\rm uniformly\ in}\ f\ {\rm such\ that}\ f\in{\cal P}_{C,6}.
\end{equation}
 The definition of $\rho$
allows us to write
\begin{equation}\label{gam}
\hat\gamma-\gamma=\gamma(\rho+\delta+\delta\rho).
\end{equation}
Recall the definitions (\ref{L1Lfunctions}) of  the functions $L$ and $L_1$ from last section (with $d=1$), the definition of $\gamma=\gamma_{h_{2,n}}$ from (\ref{h6idealest})  and that  of $\hat\gamma$ from (\ref{h6realest}), and note that $\gamma$ is bounded above and away from zero.
We then have
$$K\left(\frac{t-X_i}{h_{2,n}}\hat \gamma(X_i;h_{1,n},h_{2,n}, h_{3,n},h_{4,n})\right)
=K\left(\frac{t-X_i}{h_{2,n}}\gamma(X_i)(1+\delta(X_i)+\rho(X_i)+\delta(X_i)\rho(X_i))\right)$$
$$=K\left(\frac{t-X_i}{h_{2,n}}\gamma(X_i)\right)+K^\prime\left(\frac{t-X_i}{h_{2,n}}\gamma(X_i)\right)
\frac{t-X_i}{h_{2,n}}\gamma(X_i)(\delta(X_i)+\rho(X_i)+\delta(X_i)\rho(X_i))+\delta_2(t,X_i)$$
$$=K\left(\frac{t-X_i}{h_{2,n}}\gamma(X_i)\right)+
L_1\left(\frac{t-X_i}{h_{2,n}}\gamma(X_i)\right)(\delta(X_i)+\rho(X_i)+\delta(X_i)\rho(X_i))+\delta_2(t,X_i),
$$
where
\begin{equation}\label{h6delta2}
\delta_2(t,X_i)=\frac{K^{\prime\prime}(\Xi)}{2}
\frac{(t-X_i)^2}{h_{2,n}^2}\gamma^2(X_i)(\delta(X_i)+\rho(X_i)+\delta(X_i)\rho(X_i))^2,
\end{equation}
$\Xi$  being a (random) number between $\frac{t-X_i}{h_{2,n}}\gamma(X_i)$
and $\frac{t-X_i}{h_{2,n}}\gamma(X_i)(1+\delta(X_i)+\rho(X_i)+\delta(X_i)\rho(X_i)).$
Then, plugging this development and the development (\ref{gam}) of $\hat\gamma$ in the definition (\ref{h6realest}) of $\hat f$, we obtain the following, where we drop all the arguments for brevity,
\begin{eqnarray}
\hat f(t;h_{1,n},h_{2,n},h_{3,n},h_{4,n})&&\!\!\!\!\!\!\!\!\!\!\!-\bar f(t;h_{2,n})=\frac{1}{n h_{2,n}}\sum_{i=1}^{n}L\delta\gamma+\frac{1}{n h_{2,n}}\sum_{i=1}^{n}L\rho\gamma\label{h6mainterms}\\
&&\!\!\!\!\!\!+\frac{1}{n h_{2,n}}\sum_{i=1}^{n}[L\delta\rho\gamma+L_1(\delta +\rho+\delta\rho)^2\gamma+(1+\delta +\rho+\delta\rho)\gamma\delta_2].\label{remainder}
\end{eqnarray}
First, we check the order of the term (\ref{remainder}). Since $\gamma$ is bounded above and below, $\delta\rightarrow 0$  a.s. by (\ref{h6zero}) and $\rho\rightarrow 0$ a.s. by (\ref{h6rho}), we have $\left|\frac{t-X_i}{h_{2,n}}\right|\le B_1|\Xi|$ for some constant $B_1<\infty$ and therefore $\frac{(t-X_i)^2}{h_{2,n}^2}K''(\Xi)$ is bounded since $K''$ has bounded support. This together with (\ref{h6zero}) and (\ref{h6rho}) (which show $\rho<<\delta$) and the definition of $\delta_2$ in (\ref{h6delta2}), imply that $\delta_2(t, X_i)\le B_2\|f\|_\infty \delta^2(X_i)$ for some constant $B_2$. Then, again by (\ref{h6zero}) and (\ref{h6rho}), (\ref{remainder}) is dominated by $\frac{1}{nh_{2,n}}\sum_{i=1}^n\delta^2\gamma$ which has order $n^{-47/65}({\log n})^{47/65}$. Therefore,
\begin{equation}\label{94}
(\ref{remainder})=o_{\rm a.s.}(n^{-6/13}({\log n})^{6/13})
\end{equation}
uniformly in $t\in\mathbb R$ and in $f\in{\cal P}_{C,6}$.
Next, we will check the order of the two terms at the right in (\ref{h6mainterms}). Each of them will require further decompositions. For the first term,
using the decomposition  (\ref{deltatalor}) of $\delta$, we have, just as in (\ref{eps1})-(\ref{e5}),
\begin{eqnarray}
&&\frac{1}{n h_{2,n}}\sum_{i=1}^{n}L\delta\gamma=\frac{1}{n h_{2,n}}\sum_{i=1}^{n}L\left(\frac{t-X_i}{h_{2,n}}\gamma(X_i)\right)\delta(X_i)\gamma(X_i)\nonumber\\
&&=\frac{1}{n h_{2,n}}\sum_{i=1}^{n}L\left(\frac{t-X_i}{h_{2,n}}\gamma(X_i)\right)
\frac{\alpha'(f(X_i))b(X_i;h_{1,n})
}{\alpha(f(X_i))}\gamma(X_i)\label{h6b}\\
&&~~+\frac{1}{n h_{2,n}}\sum_{i=1}^{n}L\left(\frac{t-X_i}{h_{2,n}}\gamma(X_i)\right)
\frac{\alpha'(f(X_i))D(X_i;h_{1,n})
}{\alpha(f(X_i))}\gamma(X_i)\label{h6D}\\
&&~~+\frac{1}{n h_{2,n}}\sum_{i=1}^nL\left(\frac{t-X_i}{h_{2,n}}\gamma(X_i)\right)\frac{\alpha''(\eta(X_i))}{\alpha(f(X_i))}[\hat f(X_i;h_{1,n})-f(X_i)]^2\gamma(X_i)\label{h6deltaequare}
\end{eqnarray}
Since the functions $L(x), \alpha''(\eta(x))$ and $\gamma(x)$ are bounded and the clipping function $p(x)$ is bounded away from zero, it can be easily seen, using (\ref{classic1}), (\ref{classic2}), that (\ref{h6deltaequare}) is dominated by
\begin{equation}\label{6delta2}
|(\ref{h6deltaequare})|=O_{\rm a.s.}\left(\frac{h_{1,n}^4}{h_{2,n}}\right)=O_{\rm a.s.}\left(\left(\frac{\log n}{n}\right)^{47/65}\right)\ \ {\rm uniformly\ on}\ \mathbb{R}\ {\rm and\ in}\ f\in{\cal P}_{C,2}.
\end{equation}
 As with the bounds for $C^4(\mathbb R)$ densities, the main terms in the present decomposition are (\ref{h6b})  and  (\ref{h6D}),  and they can be handled as in the previous section, basically using the Talagrand and Major inequalities: the bounds obtained have the same expressions as those bounds in terms of the bandwidths, which now of course are different. For the bias term, the bound is of the order of $h_{1,n}^2h_{2,n}^2=(\log n)/n)^{36/65}$, see (\ref{bb}). For the variance term, it is of the order of $(\log n)/n)^{1/2}h_{2,n}^{-1/2}=(n/\log n)^{-6/13}$ (see (\ref{diag}), (\ref{ust}) and particularly, (\ref{T})). So, we have, uniformly in $t\in\mathbb R$ and in $f\in{\cal P}_{C,2}$,
\begin{equation}\label{bv}
|(\ref{h6b})|=o_{\rm a.s.}\left(\left(\frac{\log n}{n}\right)^{6/13}\right),\;\;\ |(\ref{h6D})| =O_{\rm a.s.}\left(\left(\frac{\log n}{n}\right)^{6/13}\right).
\end{equation}

Finally, we check the order of the second term in (\ref{h6mainterms}), $\frac{1}{n h_{2,n}}\sum_{i=1}^{n}L\rho\gamma$. We give the details for the estimation of this term because it is different from the previous case.
In the definition (\ref{h6rhodef}), by assumption \ref{h6ass3}, (\ref{classic1}), (\ref{classic2}), (\ref{h6xi}) and (\ref{h6zeta}),
$\sup_{x\in \mathbb{R}}|f_{G_2}(x;h_{4,n})\hat f(x; h_{1,n})-2(f_{G_1}(x; h_{3,n}))^2|$ is bounded almost surely.
Hence, since  $p\ge 1$, we have, for the denominator $de(\rho)$ of $\rho$,
\begin{eqnarray}
&&\inf_{x\in \mathbb{R}}|de(\rho)|\nonumber\\
&&:=\inf_{x\in \mathbb{R}}|24\tau_2c^6p^3(c^{-2}\hat f(x;h_{1,n}))/\tau_4+h_{2,n}^2f_{G_2}(x;h_{4,n})\hat f(x; h_{1,n})-2h_{2,n}^2(f_{G_1}(x; h_{3,n}))^2|\nonumber\\
&&\ge B_d\nonumber
\end{eqnarray}
almost surely, for some universal constant $B_d>0$ if $n$ is large enough. Therefore, almost surely,
\begin{eqnarray}
&&\!\!\!\!\!\sup_{t\in \mathbb{R}}\left|\frac{1}{n h_{2,n}}\sum_{i=1}^{n}L\rho\gamma\right|\nonumber\\
&&\!\!\!\!\!\le\sup_{t\in \mathbb{R}}\frac{h_{2,n}}{B_d n}\sum_{i=1}^n \left|L\left(\frac{t-X_i}{h_{2,n}}\gamma(X_i)\right)\gamma(X_i)[(1+\delta(X_i))^6-1]S(X_i)\right|\label{h6deltafunc}\\
&&\!\!\!\!\!+\sup_{t\in \mathbb{R}}\frac{h_{2,n}}{B_d n}\sum_{i=1}^n\left|L\left(\frac{t-X_i}{h_{2,n}}\gamma(X_i)\right)\gamma(X_i)[f''(X_i)+\zeta(X_i)]
[D(X_i;h_{1,n})+b(X_i;h_{1,n})]\right|\label{h6D+b}\\
&&\!\!\!\!\!+\sup_{t\in \mathbb{R}}\frac{h_{2,n}}{B_d n}\sum_{i=1}^n\left|L\left(\frac{t-X_i}{h_{2,n}}\gamma(X_i)\right)\gamma(X_i)f(X_i)\zeta(X_i)\right|\label{h6zetaterm}\\
&&\!\!\!\!\!+\sup_{t\in \mathbb{R}}\frac{h_{2,n}}{B_d n}\sum_{i=1}^n \left|4L\left(\frac{t-X_i}{h_{2,n}}\gamma(X_i)\right)\gamma(X_i)f'(X_i)\xi(X_i)\right|\label{h6xiterm}\\
&&\!\!\!\!\!+\sup_{t\in \mathbb{R}}\frac{h_{2,n}}{B_d n}\sum_{i=1}^n\left |2L\left(\frac{t-X_i}{h_{2,n}}\gamma(X_i)\right)\gamma(X_i)\xi(X_i)^2\right| \label{h6xisquare}
\end{eqnarray}

Since the functions $L(x), ~\gamma(x),~f(x), f'(x)$ and $f''(x)$ are bounded,  it follows that (\ref{h6deltafunc}) and (\ref{h6D+b}) are of the order $O_{\rm a.s.}([(\log n)/n]^{31/65})=o_{\rm a.s.}([(\log n)/n]^{6/13})$ uniformly in $t\in \mathbb R$ and $f\in{\cal P}_{C,2}$, the first by the estimate (\ref{h6zero}) and the second by the classical bounds (\ref{classic1}) and (\ref{classic2}). It also follows from  (\ref{h6xi}), that (\ref{h6xisquare}) is    $O_{\rm a.s.}([(\log n)/n]^{8/11+1/13})$ uniformly in $t$ and $f$.
Now we estimate the term (\ref{h6zetaterm}).
If a class of functions is VC type, so is the class of its absolute values (covering numbers are smaller), hence, Lemma \ref{vc1} shows that the classes of functions
\begin{equation}\label{M}
{\cal N}_n:=\left\{N(x)=\left|L\left(\frac{t-x}{h_{2,n}}\gamma(x)\right)\gamma(x)f(x)\right|:t\in \mathbb{R}\right\}
\end{equation}
are of VC type  for envelopes of the order of  $||f||_\infty^{3/2} O(1)$ and admitting the same characteristic constants $A$ an $v$.   If we set
 $$N_i(t)=\left|L\left(\frac{t-X_i}{h_{2,n}}\gamma(X_i)\right)\gamma(X_i)f(X_i)\right|,$$
it then follows by the properties of $L$ and $p$, that
$$\sup_{t\in \mathbb{R}}EN_i(t)\lessim ||f||_\infty^{3/2}h_{2,n},\;\;
\sup_{t\in \mathbb{R}}EN_i^2(t)\lessim ||f||_\infty^3h_{2,n},\;\;
\sup_{t\in \mathbb{R}}N_i(t)\lessim ||f||_\infty^{3/2},$$
So, we have
\begin{eqnarray*}
&&\!\!\!\sup_{t\in \mathbb{R}}\frac{h_{2,n}}{B_d n}\sum_{i=1}^n\left|L\left(\frac{t-X_i}{h_{2,n}}\gamma(X_i)\right)\gamma(X_i)f(X_i)\right|\\
&&~~\le \sup_{t\in \mathbb{R}}\left|\frac{h_{2,n}}{B_dn}\sum_{i=1}^{n} [N_i(t)-EN_i(t)]\right|+ \sup_{t\in \mathbb{R}}h_{2,n}|EN_1(t)|/B_d\\
&&~~\lessim   \sup_{t\in \mathbb{R}}\left|\frac{h_{2,n}}{n}\sum_{i=1}^{n} [N_i(t)-EN_i(t)] \right|+||f||^{3/2}n^{-2/13}(\log n)^{2/13},
\end{eqnarray*}
and Talagrand's inequality gives that there exist $D_1,D_2>1$ such that 
$$\sum_n\sup_{f\in{\cal P}_{C,2}}{\Pr}_f\left\{\sup_{t\in \mathbb{R}}\left|\sum_{i=1}^{n} [N_i(t)-EN_i(t)] \right|\ge D_1\sqrt{nh_{2,n}\log n}\right\}\le
C_2\sum_n\exp\left(-D_2\log n\right)<\infty.$$
The last two estimates yield
$$\sup_{t\in \mathbb{R}}\frac{h_{2,n}}{B_d n}\sum_{i=1}^n\left|L\left(\frac{t-X_i}{h_{2,n}}\gamma(X_i)\right)\gamma(X_i)f(X_i)\right|=O_{\rm a.s.}(n^{-2/13}(\log n)^{2/13})\ \ {\rm uniformly\ in}\ \ f\in{\cal P}_{C,2}.$$
Combining this with the bound (\ref{h6zeta}) for $\zeta$, gives
\begin{eqnarray}
&&\sup_{t\in \mathbb{R}}\frac{h_{2,n}}{B_d n}\sum_{i=1}^n\left|L\left(\frac{t-X_i}{h_{2,n}}\gamma(X_i)\right)\gamma(X_i)f(X_i)\zeta(X_i)\right|\nonumber\\
&&\le \sup_{x\in \mathbb{R}}|\zeta(x)|\times \sup_{t\in \mathbb{R}}\frac{h_{2,n}}{B_d n}\sum_{i=1}^n\left|L\left(\frac{t-X_i}{h_{2,n}}\gamma(X_i)\right)\gamma(X_i)f(X_i)\right| \nonumber\\
&&=O_{\rm a.s.}([(\log n)/n]^{6/13})\;\;\;a.s. \nonumber
\end{eqnarray}
Note that $h_{2,n}^2\backsimeq n^{-2/13}(\log n)^{2/13}$ and $h_{4,n}\backsimeq n^{-1/13}(\log n)^{1/13}$ play  critical roles in this estimation.
The same argument produces the same bound for the term (\ref{h6xiterm}).

So, we have shown that each of the three terms in the decomposition (\ref{h6mainterms}) and (\ref{remainder}) of
$\hat f(t;h_{1,n},h_{2,n},h_{3,n}, h_{4,n})-\bar f(t;h_{2,n})$ is at most of the order $O_{\rm a.s.}([(\log n)/n]^{6/13})$ uniformly in $t$ and $f$, that is,

\begin{proposition}\label{h6real-ideal}
Under the Assumptions \ref{h6ass3}, for any $C<\infty$ we have:
$$\sup_{t\in \mathbb R}\left|\hat f(t;h_{1,n},h_{2,n}, h_{3,n}, h_{4,n})-f_{JKH}(t;h_{2,n})\right|=O_{\rm a.s.}\left(\left(\frac{\log n}{n}\right)^{6/13}\right)\ \ {\rm uniformly\ in}\ \ f\in{\cal P}_{C,2}.$$
\end{proposition}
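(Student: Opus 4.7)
The plan is to combine the estimates already assembled in Section 4 and to check that each piece in the decomposition of $\hat f - f_{JKH}$ is at most of the target order $((\log n)/n)^{6/13}$. Starting from the identity $\hat\gamma - \gamma = \gamma(\delta + \rho + \delta\rho)$ in (\ref{gam}) and a second-order Taylor expansion of $K$ around $(t-X_i)\gamma(X_i)/h_{2,n}$, combined with $L = K + L_1$ (in dimension one), I would obtain the three-part decomposition (\ref{h6mainterms})--(\ref{remainder}). The quadratic residual (\ref{remainder}) is controlled directly by the uniform bounds (\ref{h6zero}) and (\ref{h6rho}) for $\delta$ and $\rho$ together with the trivial sup estimate of $\frac{1}{nh_{2,n}}\sum|L|\gamma$; this yields (\ref{94}), which is strictly smaller than the target order.

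For the first main sum $\frac{1}{nh_{2,n}}\sum L\delta\gamma$ I would use the linearization (\ref{deltatalor}) of $\delta$ to split it into the bias piece (\ref{h6b}), the variance piece (\ref{h6D}), and the second-order remainder (\ref{h6deltaequare}). The remainder is absorbed via the sup estimate (\ref{6delta2}); the bias and variance pieces are structurally identical to (\ref{eps2}) and (\ref{eps1}) of Section 3, so Talagrand's inequality on the VC class (\ref{qu}) handles the bias piece (with the sharp bound (\ref{biasbias}) on the expectation obtained from the inverse-function change of variables), while the Hoeffding decomposition (\ref{hoef1}) of the variance piece handles the diagonal and degenerate parts via Talagrand, and Major's inequality (\ref{major}) handles the canonical $U$-process part. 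Plugging in the new bandwidth exponents reproduces (\ref{bv}).

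The genuinely new work is the second main sum $\frac{1}{nh_{2,n}}\sum L\rho\gamma$. Using the explicit formula (\ref{h6rhodef}) for $\rho$ and the almost-sure lower bound on its denominator, I would reduce the problem to bounding the five sums (\ref{h6deltafunc})--(\ref{h6xisquare}) coming from the five summands in the numerator of $\rho$. The terms (\ref{h6deltafunc}), (\ref{h6D+b}), and (\ref{h6xisquare}) are controlled by combining the sup estimates (\ref{h6zero}), (\ref{classic1})--(\ref{classic2}), and (\ref{h6xi}) with the crude bound $\sup_t\frac{h_{2,n}}{n}\sum|L|\gamma = O_{\rm a.s.}(h_{2,n})$, all of which give rates strictly better than the target. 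The delicate pieces are (\ref{h6zetaterm}) and (\ref{h6xiterm}), involving $\zeta$ and $\xi$: a naive sup bound would lose a factor of $h_{2,n}^{-1}$, so instead I would apply Talagrand's inequality to the VC class (\ref{M}) to show that $\sup_t\frac{h_{2,n}}{n}\sum|L\gamma f| = O_{\rm a.s.}(h_{2,n}^2)$, after which multiplication by $\|\zeta\|_\infty = O(n^{-4/13}(\log n)^{4/13})$ (respectively $\|\xi\|_\infty$) produces exactly the bound $(\log n/n)^{6/13}$.

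The principal obstacle is the fine-tuned bandwidth balance: the choices $h_{2,n}=h_{4,n}=((\log n)/n)^{1/13}$ and $h_{3,n}=((\log n)/n)^{1/11}$ are engineered precisely so that the binding constraints $\|\zeta\|_\infty h_{2,n}^2$ and $\|\xi\|_\infty h_{2,n}^2$ both land exactly at $((\log n)/n)^{6/13}$, while every other contribution is of strictly smaller order. Verifying this amounts to careful bookkeeping across the various pieces; the analytical machinery (Lemma \ref{vc1}, Talagrand's inequality (\ref{tal}), and Major's inequality (\ref{major})) is already in place from the $C^4$ proof and transfers without modification to the present setting.
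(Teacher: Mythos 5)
Your proposal is correct and follows essentially the same route as the paper: the same decomposition (\ref{h6mainterms})--(\ref{remainder}), the same reduction of the $\delta$-term to (\ref{h6b})--(\ref{h6deltaequare}) handled by Talagrand's and Major's inequalities as in Section 3, and the same refined treatment of the $\rho$-term in which Talagrand's inequality applied to the class (\ref{M}) upgrades the crude $O(h_{2,n})$ bound to $O_{\rm a.s.}(h_{2,n}^2)$ before multiplying by $\|\zeta\|_\infty$ (resp.\ $\|\xi\|_\infty$). One cosmetic remark: with $h_{3,n}=((\log n)/n)^{1/11}$ the contribution $h_{2,n}^2\|\xi\|_\infty$ is of order $((\log n)/n)^{2/13+4/11}$, strictly smaller than the target, so only the $\zeta$-term lands exactly at $((\log n)/n)^{6/13}$; this does not affect the validity of the argument.
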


Combining this proposition with the estimates of the bias and variance terms of the ideal estimator, respectively Corollary \ref{6} and Proposition \ref{varid}, yields Theorem \ref{h6main0}.


\medskip

\noindent E. Gin\'e\hfill\break
\noindent Department of Mathematics, U-3009\hfill\break
\noindent University of Connecticut\hfill\break
\noindent Storrs, CT 06269\hfill\break
\noindent gine@math.uconn.edu

\medskip
\noindent H. Sang\hfill\break
\noindent National Institute of Statistical Sciences\hfill\break
\noindent PO Box 14006\hfill\break
\noindent Research Triangle Park, NC 27709 \hfill\break
\noindent sang@niss.org
\end{document}